%% file: article.tex
\documentclass[onefignum,onetabnum]{siamart251216}

\ifpdf
\hypersetup{
  pdftitle={High-order entropy-stable isothermal wall boundary condition for the compressible Navier--Stokes equations},
  pdfauthor={Mohammed Sayyari and Nail K. Yamaleev}
}
\fi

\usepackage[utf8]{inputenc}
\usepackage{amsmath} 
\usepackage{amsfonts}
\usepackage{amssymb}
\usepackage{subcaption}
\usepackage{array}
\usepackage{etoolbox}
\usepackage{tikz}
\usepackage{hyperref}
\usepackage{graphicx}
\graphicspath{{figures/}}

\usetikzlibrary{decorations.pathreplacing,calligraphy}

\newtheorem{note}{Note}
\newtheorem{remark}{Remark}

\author{Mohammed Sayyari and Nail K. Yamaleev}
\title{High-order entropy-stable isothermal wall boundary condition for the compressible Navier--Stokes equations}

\input{positivity3D_def}

\begin{document}


\maketitle

\begin{abstract}
  Entropy-stable boundary conditions are critical for ensuring that the corresponding numerical scheme satisfies the discrete entropy inequality, thus mimicking the second law of thermodynamics for the compressible Navier-Stokes equations discretely. Provably entropy-stable adiabatic wall boundary conditions were introduced in Parsani 2014~\cite{parsani2014entropy}. These discrete wall boundary conditions were further generalized in Dalcin 2019~\cite{dalcin2019conservative} to a moving adiabatic solid wall or a wall with a prescribed heat flux for the compressible Navier--Stokes equations discretized by using summation-by-parts (SBP) and simultaneous-approximation-term (SAT) operators. Currently, no entropy-stable formulation using SBP-SAT operators for isothermal wall boundary conditions is available in the literature. This paper presents new isothermal no-slip wall boundary conditions that: 1) enforce the $T|_{y=0}=T^{wall}$ and $\vec{v}|_{y=0}=\vec{v}^{wall}$ conditions with the design order of accuracy while maintaining stability, 2) provide the correct sign of entropy production at the wall, and 3) mimic the entropy balance of the Navier-Stokes equations at the discrete level. The structure-preserving and design-order properties of the proposed methodology are demonstrated and verified on standard benchmark problems for compressible flows.
\end{abstract}

\begin{keywords}
  entropy stability, isothermal wall boundary condition, compressible Navier--Stokes equations, summation-by-parts operators, simultaneous-approximation-term operators.
\end{keywords}

\begin{AMS}
  35Q30, 
  35L65, 
  76N10, 
  80A20, 
  76M45  
\end{AMS}


\section{Introduction}
The isothermal boundary condition is common in fluid dynamics and is used in theoretical, numerical, and experimental studies of convective heat transfer around solid objects~\cite{schlichting2016boundary,lighthill1950contributions} and cavities~\cite{ostrach1972natural,paolucci1994differentially}. While most of the literature focuses on incompressible flows, compressible flows are also of interest under isothermal conditions. This is especially important for viscous and high-speed flows around objects where temperature gradients are significant~\cite{anderson2006hypersonic,spina1994physics}. These gradients pose a significant challenge for numerical schemes, where sharp jumps may increase stiffness due to additional physical and artificial viscosity and positivity limitations, especially in high-order schemes due to the Gibbs phenomenon~\cite{persson2006sub,pirozzoli2011numerical,wang2013high}. In addition, high-order entropy-stable schemes require more stringent conditions to ensure that only physical entropy production occurs and that both the boundary condition and the scheme as a whole are design-order accurate and provably entropy-stable~\cite{parsani2014entropy,dalcin2019conservative,chan2022entropy}.

Although the focus is on the isothermal boundary condition, it is not alone in this challenge; the adiabatic and no-slip wall boundary conditions have undergone consistent development to ensure entropy stability and design-order accuracy. In~\cite{parsani2014entropy}, novel entropy-stable no-slip boundary conditions were developed and verified. These boundary conditions were weakly enforced using Simultaneous Approximation Term (SAT) penalties, with a manufactured viscous-wall state. In~\cite{dalcin2019conservative}, a novel local discontinuous Galerkin (LDG) formulation was developed to compute the entropy gradient with a penalty term for the entropy jump across interfaces and boundaries. With that, the no-slip boundary condition was extended to (inspirational) moving wall boundary conditions and heat-entropy flux imposition. In addition to the viscous wall state, the entropy wall state is also constructed using the equivalent entropy variables. Both \cite{parsani2014entropy,dalcin2019conservative} note that the imposition of the isothermal boundary condition via the viscous state is not entropy-stable and would result in positive entropy production at the wall. Thus, no further analysis or development of the isothermal wall boundary condition was done in these works. Furthermore, in~\cite{chan2022entropy}, the wall boundary conditions in~\cite{dalcin2019conservative} inspired their development for modal discontinuous Galerkin entropy-stable high-order schemes. There, in addition to the no-slip and adiabatic wall boundary conditions, the authors also implemented an isothermal boundary condition with the focus on mimicking the continuous entropy balance using an isothermal consistent entropy variable wall state, $W_i^\text{wall}=\frac{2 u_i^\text{wall}}{T^\text{wall}} - W_i$, $i=2,3,4$, and $W_5^\text{wall}=-\frac{2}{T^\text{wall}}-W_5$. The authors in~\cite{chan2022entropy} reported that, while the proposed discrete isothermal no-slip wall boundary condition mimics the continuous entropy inequality, it is not provably entropy-stable and can invariably lead to positive entropy production. In addition, the accuracy and order properties of this boundary condition were not verified.

Moreover, the isothermal boundary condition in~\cite{chan2022entropy} cannot be adapted to SBP-SAT-based schemes due to a stringent temperature positivity requirement imposed on the equivalent primitive variables, $T^\text{wall}< 2 T$. This boundary condition also does not resolve the entropy-production issue presented in~\cite{parsani2014entropy,dalcin2019conservative} for SBP-SAT-based schemes. Therefore, this paper will not build on the work in~\cite{chan2022entropy}. Instead, herein, a new isothermal wall boundary condition is developed based on the work in~\cite{dalcin2019conservative} that: 1) enforces the $T|_{y=0}=T^{wall}$ and $\vec{v}|_{y=0}=\vec{v}^{wall}$ conditions with design-order accuracy while maintaining stability, 2) provides the correct sign of entropy production at the wall, and 3) mimics the entropy balance of the Navier-Stokes equations at the discrete level. The order properties and entropy boundedness of this novel isothermal boundary condition are verified using 2D internal and external compressible flow cases. The robustness and accuracy are verified for a $Ma=10$ viscous flow around a cylinder, and a $Kn=0.002$ thermal Couette flow, respectively.

The paper is organized as follows. The compressible Navier--Stokes equations and the entropy inequality are presented in Section~\ref{sec:compressible-Navier-Stokes}. The entropy bound for the proposed isothermal boundary condition is analyzed in~\ref{sec:entropy-bounded-isothermal}. The entropy-stable high-order scheme is presented in Section~\ref{sec:entropy-stable-high-order-scheme}, where the positivity-preserving hybrid scheme and the entropy-stable wall boundary conditions are described. The focus of this paper, the isothermal-consistent wall heat flux, is derived in Section~\ref{subsec:isothermal-wall-heat-flux}. Section~\ref{sec:numerical-results} presents numerical results, including convergence studies proving the design-order property for a channel and flow over a cylinder, subsonic flow around a square cylinder, and hypersonic flow over a cylinder, demonstrating the versatility of this boundary condition treatment, and the Couette flow case, a classic case in the literature. Finally, conclusions are drawn in Section~\ref{sec:conclusions}.


\section{The compressible Navier--Stokes equations and the entropy inequality}
\label{sec:compressible-Navier-Stokes}

The compressible Navier--Stokes equations in curvilinear coordinates $(\xi_1,\xi_2, \xi_3)$ are given by

\begin{align}
    \label{eq:compressible-Navier--Stokes}
    \frac{\partial (JU)}{\partial t}
    + \frac{\partial}{\partial \xi_d}
    \left[
        J \frac{\partial \xi_d}{\partial x_i}
    \right]
    {\bm F}^{(I)}_i
    = \frac{\partial}{\partial \xi_d}
    \left[
        J \frac{\partial \xi_d}{\partial x_i}
    \right]
    {\bm F}^{(V)}_i,
\end{align}
where $\U = \left[\rho,\rho v_1,\rho v_2,\rho v_3,\rho\Efnc\right]^\top$ is a vector of the conservative variables ($\rho$- density, $\rho v_i$- momentum, and $\rho E$- total energy), $J=\left|\frac{\partial(x_1, x_2, x_3)}{\partial(\xi_1, \xi_2, \xi_3)} \right|$ is the metric Jacobian, and $(x_1, x_2, x_3)$ are the Cartesian coordinates. In Equation \eqref{eq:compressible-Navier--Stokes} and hereinafter, the Einstein summation convention is used $\cdot_d\cdot_d = \sum_{d}\cdot_d\cdot_d$. The inviscid and viscous fluxes of the Navier--Stokes equations, ${\bm F}^{(I)}_i$ and ${\bm F}^{(V)}_i$, are given by
\begin{align}
    \label{eq:inviscid-viscous-fluxes}
    &{\bm F}^{(I)}_i
    = \begin{bmatrix}
        \rho v_i \\
        \rho v_1 v_i + P\delta_{1i} \\
        \rho v_2 v_i + P\delta_{2i} \\
        \rho v_3 v_i + P\delta_{3i} \\
        (\rho\Efnc + P)v_i
    \end{bmatrix},
    &&
    {\bm F}^{(V)}_i
    = \begin{bmatrix}
        0 \\
        \tau_{1i} \\
        \tau_{2i} \\
        \tau_{3i} \\
        \tau_{ji}v_j 
        + \kappa \frac{\partial T}{\partial x_i}
    \end{bmatrix},
\end{align}
where $\delta_{ij}$ is the Kronecker delta, $P=\rho R_g T$ is the pressure, $R_g$ is the gas constant, $T$ is the temperature, and $\kappa$ is the heat conductivity. The stress tensor is given by
\begin{equation}
    \nonumber
    \tau_{ji} = \mu \left( \frac{\partial v_j}{\partial x_i} + \frac{\partial v_i}{\partial x_j} - \frac{2}{3}\delta_{ji}\frac{\partial v_l}{\partial x_l}\right),
\end{equation}
where $\mu$ is the dynamic viscosity. 

The compressible Navier--Stokes equations are equipped with the Godunov entro-\\
py-pair $(\mathcal{S}, \mathcal{F}) = (-\rho s, -\rho s \bm{v})$ with entropy variables
\begin{equation}
    W = \frac{dS}{dU} = \left[c_v-s-\frac{v_iv_i}{T},\frac{v_1}{T},\frac{v_2}{T},\frac{v_3}{T},-\frac{1}{T}\right]^\top
\end{equation}
where$s$ is the thermodynamic entropy. These equations satisfy the following entropy inequality, assuming the corresponding boundary conditions are entropy-stable:
\begin{equation}\label{eq:EinEquation3}
  \int_{\hat\Omega} 
  \frac{\partial ( J\mathcal{S})}{\partial t}
  \mr{d}\hat\Omega
  \leq \int_{\Gamma} -\kappa \frac{1}{T} \frac{\partial T}{\partial n} \mr{d}\Gamma,
\end{equation} 
where the adiabatic wall boundary condition corresponds to $\frac{\partial T}{\partial n}=0$ and thus zero entropy production at the wall. However, in the presence of an isothermal wall boundary condition, the right-hand side of the entropy inequality is non-zero, corresponding to a physical entropy production (or dissipation) term at the wall. Thus, the isothermal wall boundary condition is entropy-stable if the heat-entropy flux produced at the wall does not exceed the physical heat-entropy flux. 

For the sake of clarity, the curvilinear description using the metric Jacobian $J$ is omitted in the remainder of this paper, and the Cartesian description is used instead. The derivations and proofs can be recast in curvilinear coordinates without loss of generality.


\section{Structure-preserving isothermal boundary condition}
\label{sec:entropy-bounded-isothermal}

The discrete pe-\\
nalty term is introduced as a correction term to the temperature gradient. The resulting isothermal temperature gradient is constructed as
\begin{equation}
    \left(\frac{\partial T}{\partial x_i}\right)^\text{wall} 
    = \frac{\partial T}{\partial x_i} - \alpha (T - T^\text{wall}),
\end{equation}
where the internal heat flux is the term on the right-hand side of the entropy balance. Thus, by defining $g(t)$ as the temperature jump at the wall, the penalty can be applied as a heat-entropy flux source term as follows.
\begin{equation}
    g(t) = -\kappa \alpha (T - T^\text{wall}),
\end{equation}
where $\alpha$ is a penalty parameter and $\kappa$ is derived from the coefficient of the Fourier term in the energy equation. The discrete properties of this penalty are further discussed in the semi-discretization sections. For purely analytical purposes, this term is assumed to be added in the continuous equations. The purpose is to study the effects of the penalty term on the transition to an isothermal wall from an initial condition $T\neq T^\text{wall}$ required for some cases considered in the numerical results section.

The entropy balance, given $g(t)$, can be written as
\begin{equation}
    \label{eq:entropy-balance-gt}
    \int_{\Omega} \frac{\partial S}{\partial t} \, d\Omega
    \leq \int_{\Gamma} \left( -\kappa \frac{1}{T} \frac{\partial T}{\partial n} + \kappa \alpha \frac{(T - T^\text{wall})}{T} \right) \, d\Gamma.
\end{equation}
Noting that for $T,T^\text{wall} > 0$,
\begin{equation}
    \label{eq:temperature-diff}
    \frac{(T - T^\text{wall})}{T} = 1 - \frac{T^\text{wall}}{T} < 1.
\end{equation}
The entropy production at the wall is
\begin{align*}
    \int_{\Omega} \frac{\partial S}{\partial t} \, d\Omega
    &\leq \int_{\Gamma} \left( -\kappa \frac{1}{T} \frac{\partial T}{\partial n} + \kappa \alpha \right) \, d\Gamma.
\end{align*}
Thus, if the heat entropy flux is bounded, the time derivative of the entropy is bounded. It has been shown in~\cite{svard2015weak,sayyari2022capabilities,upperman2021high} that such a bound results in an $L^2$ bound on the solution. However, the temperature gradient term remains with no a priori estimate.

A ballistic energy estimate inspired by the work in~\cite{chaudhuri2022navier} is used to derive a bound that can be controlled. Assuming no-slip boundary conditions $v \cdot n = 0$ at the wall, and imposing $g(t)$ as a penalty term on the boundary, $\Gamma$, where $T^\text{wall}$ is constant along the wall. In the following lemma, a bound is established on the time derivative of the ballistic energy.

\begin{lemma}[Ballistic energy rate bound]
    \label{lem:ballistic-rate-bound}
    Define the ballistic energy as $B = \rho E + T^\text{wall} \mathcal{S}$. Assuming no-slip (including the tangentially moving wall) and $g(t)$ penalty at a wall, then
    \begin{align}
    \label{eq:ballistic-energy-bound}
            \frac{d}{dt} \int_\Omega B \, d\Omega
            &\leq
            \int_{\Gamma}
            \frac{\kappa}{4\alpha T} \left(\frac{\partial T}{\partial n}\right)^2
            \, d\Gamma.
    \end{align}
\end{lemma}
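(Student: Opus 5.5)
The plan is to compute the time derivatives of $\int_\Omega \rho E\,d\Omega$ and of $T^\text{wall}\int_\Omega \mathcal{S}\,d\Omega$ separately, add them, and observe that the boundary terms combine into a quadratic in the temperature jump $T-T^\text{wall}$. Because $T^\text{wall}$ is constant along $\Gamma$, it can be pulled outside the integral, so
\begin{equation*}
\frac{d}{dt}\int_\Omega B\,d\Omega=\frac{d}{dt}\int_\Omega \rho E\,d\Omega+T^\text{wall}\frac{d}{dt}\int_\Omega \mathcal{S}\,d\Omega .
\end{equation*}

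\emph{Energy equation.} Integrating the energy component of \eqref{eq:compressible-Navier--Stokes} and using the divergence theorem gives only boundary terms. With $v\cdot n=0$, the inviscid flux $(\rho E+P)v\cdot n$ vanishes. The heat flux is taken in its penalized form, i.e.\ with $\partial T/\partial n$ replaced by $\partial T/\partial n-\alpha(T-T^\text{wall})$, consistent with the source $g(t)$. The viscous work term $\tau_{ji}v_jn_i$ vanishes for a stationary wall. For a tangentially moving wall I would argue that this term is matched by the corresponding work term in the entropy/ballistic balance, as in~\cite{chaudhuri2022navier}; otherwise I would keep it as a boundary work term that cancels in the combination. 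The result is
\begin{equation*}
\frac{d}{dt}\int_\Omega \rho E\,d\Omega=\int_\Gamma \kappa\Bigl(\frac{\partial T}{\partial n}-\alpha(T-T^\text{wall})\Bigr)d\Gamma .
\end{equation*}

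\emph{Entropy equation.} Contracting with $W$ gives the entropy identity. In it, the interior viscous and heat-conduction dissipation terms appear with a nonpositive sign, namely $-\int_\Omega\bigl(\tfrac{1}{T}\tau\!:\!\nabla v+\tfrac{\kappa}{T^2}|\nabla T|^2\bigr)d\Omega\le 0$. The boundary term is that of \eqref{eq:entropy-balance-gt}, i.e.\ $-\int_\Gamma \tfrac{\kappa}{T}\bigl(\partial T/\partial n-\alpha(T-T^\text{wall})\bigr)d\Gamma$. Multiplying by $T^\text{wall}>0$ preserves the inequality, and the dissipation terms can then be dropped.

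\emph{Combination.} Writing $q=\partial T/\partial n$ and $x=T-T^\text{wall}$, the sum of the two boundary integrands is
\begin{equation*}
\kappa\,(q-\alpha x)\Bigl(1-\frac{T^\text{wall}}{T}\Bigr)=\frac{\kappa}{T}\bigl(xq-\alpha x^2\bigr).
\end{equation*}
Completing the square gives
\begin{equation*}
xq-\alpha x^2=-\alpha\Bigl(x-\frac{q}{2\alpha}\Bigr)^2+\frac{q^2}{4\alpha}\le\frac{q^2}{4\alpha}.
\end{equation*}
Since $\kappa/T>0$, integrating over $\Gamma$ yields \eqref{eq:ballistic-energy-bound}.

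The main obstacle is the bookkeeping of signs and conventions for the wall heat flux and the penalty $g(t)$, especially keeping the orientation of $n$ consistent between the energy and entropy equations. The moving-wall viscous work term also needs care. I would first verify the stationary wall case in full, and then check that for a tangentially moving wall the term $\tau_{ji}v_j^\text{wall}n_i$ either cancels in the ballistic combination or is absorbed under the stated assumptions.
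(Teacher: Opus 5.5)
For a stationary wall your argument is the paper's proof. It uses the energy balance \eqref{eq:energy-balance} plus $T^\text{wall}$ times \eqref{eq:entropy-balance-gt}, the factorization $1-T^\text{wall}/T=(T-T^\text{wall})/T$, and the same completed square, closed with $\kappa/T>0$. Your $\alpha\bigl(x-\frac{q}{2\alpha}\bigr)^2$ is the paper's $\bigl(-\frac{q}{2\sqrt{\alpha}}+\sqrt{\alpha}\,x\bigr)^2$.

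\textbf{The gap: the tangentially moving wall.} You suggest that the shear work $\tau_{ji}v_j^\text{wall}n_i$ cancels in the ballistic combination. There is nothing in the entropy balance to cancel it against. Contracting the viscous flux with the entropy variables gives $W^\top F^{(V)}_i=\frac{v_j}{T}\tau_{ji}-\frac{1}{T}\bigl(\tau_{ji}v_j+\kappa\frac{\partial T}{\partial x_i}\bigr)=-\frac{\kappa}{T}\frac{\partial T}{\partial x_i}$. So the entropy boundary flux has no work term, and $\int_\Gamma\tau_{ji}v_j^\text{wall}n_i\,d\Gamma$ survives in $\frac{d}{dt}\int_\Omega B\,d\Omega$.

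This term is not sign-definite; it is positive whenever the wall drags the fluid. It also carries no factor $1/\alpha$, so in general it cannot be absorbed into the right-hand side of \eqref{eq:ballistic-energy-bound}. For example, during the spin-up of a Couette flow with $T\approx T^\text{wall}$, it roughly equals the viscous dissipation plus the growth rate of kinetic energy. That gives $\frac{d}{dt}\int_\Omega B\,d\Omega\approx\frac{d}{dt}\int_\Omega\tfrac12\rho|v|^2\,d\Omega>0$, while the right-hand side is small.

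The paper's proof does not resolve this either. It writes \eqref{eq:energy-balance} with only the heat-flux terms, which is valid only when $v=0$ on $\Gamma$. What both arguments actually establish is therefore the stationary no-slip case. To cover a moving wall you have two options:
\begin{itemize}
\item keep $\int_\Gamma\tau_{ji}v_j^\text{wall}n_i\,d\Gamma$ on the right-hand side of the bound; or
\item in the spirit of \cite{chaudhuri2022navier}, replace the kinetic part of $B$ by $\tfrac12\rho|v-\tilde v|^2$, with $\tilde v$ an extension of the wall velocity into $\Omega$. This trades the boundary work for volume terms involving $\nabla\tilde v$.
\end{itemize}
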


\begin{proof}
    Given the assumed boundary conditions, the proof begins by establishing the energy balance wall contribution.
    \begin{equation}
        \label{eq:energy-balance}
        \int_{\Omega} \frac{\partial (\rho E)}{\partial t} \, d\Omega
        = \int_{\Gamma} \left( \kappa \frac{\partial T}{\partial n} - \kappa \alpha (T - T^\text{wall}) \right) \, d\Gamma.
    \end{equation}
    Next, the total energy contribution and the entropy rate bound are combined to form the ballistic energy bound. Substituting the relationships in Equation~\ref{eq:energy-balance} and Equation~\ref{eq:entropy-balance-gt} into the time derivative of the domain integral of the ballistic energy yields the following combined bound.
    \begin{align*}
        \frac{d}{dt} \int_\Omega B \, d\Omega
        &\leq \int_{\Gamma} 
        \left( 
            \kappa \frac{\partial T}{\partial n} 
            - \kappa \alpha (T - T^\text{wall}) 
            - \kappa \frac{T^\text{wall}}{T} \frac{\partial T}{\partial n} 
            + \kappa \alpha T^\text{wall} \frac{(T - T^\text{wall})}{T}
        \right) \, d\Gamma\\
        &\leq \int_{\Gamma} 
        \left( 
            \kappa \left(1 - \frac{T^\text{wall}}{T}\right) 
            \frac{\partial T}{\partial n} 
            - \kappa \alpha \left(1 - \frac{T^\text{wall}}{T}\right) (T - T^\text{wall}) 
        \right) \, d\Gamma.
    \end{align*}
    The combined terms can be further simplified using the equality term in Eq.~\ref{eq:temperature-diff}.
    \begin{align*}
        \frac{d}{dt} \int_\Omega B \, d\Omega
        &\leq \int_{\Gamma} 
        \left( 
            \kappa \left(\frac{T - T^\text{wall}}{T}\right) 
            \frac{\partial T}{\partial n} 
            - \kappa \alpha \frac{(T - T^\text{wall})^2}{T}
        \right) \, d\Gamma\\
        &= \int_{\Gamma} -\kappa\frac{1}{T}
        \left( 
            -\left(T - T^\text{wall}\right) 
            \frac{\partial T}{\partial n} 
            + \alpha (T - T^\text{wall})^2
        \right) \, d\Gamma.
    \end{align*}
    To achieve the desired term on the boundary, the following perfect square is used.
    \begin{equation}
        PS =
        \left(
            - \frac{1}{2\sqrt{\alpha}} \frac{\partial T}{\partial n} 
            + \sqrt{\alpha} (T - T^\text{wall})
        \right)^2
        = \frac{1}{4\alpha} \left(\frac{\partial T}{\partial n}\right)^2 
        - (T - T^\text{wall}) \frac{\partial T}{\partial n}
        + \alpha (T - T^\text{wall})^2.
    \end{equation}
    Since the perfect square enters the bound with a negative sign, its contribution is non-positive. This leavs the desired term in~\eqref{eq:ballistic-energy-bound}, i.e.,
    \begin{align*}
        -\kappa\frac{1}{T}
        \left( 
            -\left(T - T^\text{wall}\right) 
            \frac{\partial T}{\partial n} 
            + \alpha (T - T^\text{wall})^2
        \right)
        &= -\kappa\frac{1}{T} PS
        + \kappa \frac{1}{4\alpha T} \left(\frac{\partial T}{\partial n}\right)^2\\
        &\leq
        \frac{\kappa}{4\alpha T} \left(\frac{\partial T}{\partial n}\right)^2,
    \end{align*}
    completing the proof.
\end{proof}

The ballistic energy bound contains both the temperature gradient and the reciprocal of the correction parameter $\alpha$. While the temperature gradient does not have any a priori estimates, the presence of the reciprocal of $\alpha$ allows us to numerically measure the convergence of the term as a whole in the results section. For the purposes of demonstrating the importance of this bound, the term on the boundary is assumed to be bounded only in the remainder of this section.

One of the main properties of satisfying the entropy inequality is that it provides an $L^2$ bound on the solution~\cite{svard2015weak,upperman2021high,sayyari2022capabilities}. This result is established by taking the time integral of the time derivative of the entropy, $\frac{\partial \mathcal{S}}{\partial t}$, the same approach applies to the ballistic energy inequality. To build on previous work, a lemma is introduced to establish the main bound used to translate the bound on the entropy rate into a bound on the solution.
\begin{lemma}[Solution difference bound]
    \label{lem:entropy_bounds_solution_diffrence}
    Let $\mathcal{S}$ be a convex entropy and define the affine entropy
    \begin{equation*}
        \bar{\mathcal{S}}\left(U(\mathcal{T})\right)=\mathcal{S}(\mathcal{T})-\mathcal{S}(0)-\frac{d \mathcal{S}(0)}{dU}^\top\left(U(\mathcal{T})-U(0)\right),
    \end{equation*}
    $\overline{U}$ is a convex combination of $U(\mathcal{T})$ and $U(0)$, and $\lambda_{\min}$ is the minimum eigenvalue of the Hessian of the entropy function, then
    \begin{equation}
        \int_\Omega \|U(\mathcal{T}) - U(0)\|_{L^2} \, d\Omega
        \leq
        \frac{2 \int_\Omega \bar{\mathcal{S}}(\mathcal{T}) \, d\Omega}{\lambda_{\min}\left(\frac{d^2 \mathcal{S}(\overline{U})}{dU^2}\right)},
    \end{equation}
\end{lemma}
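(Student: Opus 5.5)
The plan is to use a second-order Taylor expansion with integral (or mean-value) remainder of the convex entropy $\mathcal{S}$ about the initial state $U(0)$, evaluated at $U(\mathcal{T})$. At each point of $\Omega$ we have
\begin{equation*}
\mathcal{S}(U(\mathcal{T})) = \mathcal{S}(U(0)) + \frac{d\mathcal{S}(0)}{dU}^\top \left(U(\mathcal{T})-U(0)\right) + \frac{1}{2}\left(U(\mathcal{T})-U(0)\right)^\top \frac{d^2\mathcal{S}(\overline{U})}{dU^2}\left(U(\mathcal{T})-U(0)\right),
\end{equation*}
where $\overline{U}$ lies on the segment between $U(0)$ and $U(\mathcal{T})$. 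Because the state space of admissible states (positive density and temperature) is convex, the segment stays in the domain where $\mathcal{S}$ is smooth and strictly convex, so the expansion is legitimate. Moving the zeroth- and first-order terms to the left-hand side, the affine entropy $\bar{\mathcal{S}}(\mathcal{T})$ is then exactly the quadratic remainder.

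The second step is to bound that quadratic form from below by the minimum eigenvalue of the symmetric positive definite Hessian:
\begin{equation*}
\frac{1}{2}\left(U(\mathcal{T})-U(0)\right)^\top \frac{d^2\mathcal{S}(\overline{U})}{dU^2}\left(U(\mathcal{T})-U(0)\right) \geq \frac{\lambda_{\min}}{2}\,\|U(\mathcal{T})-U(0)\|^2 .
\end{equation*}
This gives the pointwise bound $\|U(\mathcal{T})-U(0)\|^2 \leq 2\bar{\mathcal{S}}(\mathcal{T})/\lambda_{\min}$. Integrating over $\Omega$ then yields the stated estimate.

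The main obstacle is that $\overline{U}$, and hence $\lambda_{\min}$, varies from point to point, whereas the statement has a single $\lambda_{\min}$ in the denominator. I would handle this in one of two ways:
\begin{itemize}
\item interpret $\lambda_{\min}$ as the infimum over the (assumed bounded, positive) range of states attained, which is positive given positivity of $\rho$ and $T$ and boundedness of the solution; or
\item apply the mean-value theorem to the integrated quantity, picking a representative $\overline{U}$.
\end{itemize}
I would adopt the infimum interpretation, stating it explicitly.

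I would also read the integrand $\|U(\mathcal{T})-U(0)\|_{L^2}$ as the pointwise squared Euclidean norm, so that its integral is the squared $L^2$ norm. With that reading, the factor $2$ and the division by $\lambda_{\min}$ come out exactly as stated.

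Finally, I would remark that, combined with the entropy (or ballistic energy) rate bound integrated in time from $0$ to $\mathcal{T}$, this converts control of $\int_\Omega \bar{\mathcal{S}}\,d\Omega$ into an $L^2$ bound on the solution.
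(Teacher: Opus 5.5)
Your proposal is correct and takes the same route as the paper, which defers to Lemma~1 of the cited work and later notes that the bound comes from Taylor's theorem for the affine entropy: the mean-value remainder makes $\bar{\mathcal{S}}$ equal to the Hessian quadratic form at $\overline{U}$, which is bounded below by $\lambda_{\min}$ and then integrated over $\Omega$. Your two explicit choices are clarifications the paper leaves implicit: handling the pointwise variation of $\overline{U}$ through a uniform lower bound on $\lambda_{\min}$, and reading the integrand as a squared norm, which matches the $\|\cdot\|_{\mathcal{P}}^2$ used in the semi-discrete theorem.
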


\begin{proof}
    See the proof of Lemma 1 in ~\cite{sayyari2022capabilities}.
\end{proof}

Lemmas~\ref{lem:ballistic-rate-bound} and~\ref{lem:entropy_bounds_solution_diffrence} are then used to establish that the bound on the ballistic energy rate results in an $L^2$ bound on the solution.

\begin{theorem}[$L^2$ ballistic energy bound for the isothermal wall boundary condition]
    \label{thm:ballistic-energy-bound}
    Assume the positivity of thermodynamic variables. Further, assume that the boundary term
    \begin{equation}
        \mathcal{C}_\text{ballistic}(\alpha)
        = \int_{\Gamma}
        \frac{\kappa}{4\alpha T} \left(\frac{\partial T}{\partial n}\right)^2
        + \kappa \frac{T^\text{wall}}{T_0}\frac{\partial T}{\partial n}
        \, d\Gamma < \infty,
    \end{equation}
    is bounded, then the solution of the compressible Navier--Stokes equations with the isothermal wall boundary condition is bounded in the $L^2$ norm by initial and boundary data
    \begin{align}
        \label{eq:l2-bound}
        \begin{split}
            \int_\Omega \|U(\mathcal{T})\|_{L^2} \, d\Omega
            &\leq
            \frac{2}{\lambda_{\min}\left(\frac{d^2 \mathcal{S}(\overline{U})}{dU^2}\right) T^\text{wall}}
            \left[
                \int_\Omega \rho E(0) \, d\Omega
                + \int_0^\mathcal{T} C_\text{ballistic}(\alpha) \, dt
            \right]\\
            &+ \int_\Omega \|U(0)\|_{L^2} \, d\Omega + \mathcal{T}\int_\Gamma \kappa\alpha\frac{\left(T^\text{wall}\right)^2}{T_0}\ d\Gamma.
        \end{split}
    \end{align}
\end{theorem}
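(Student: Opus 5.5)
The plan is to integrate the ballistic energy rate bound of Lemma~\ref{lem:ballistic-rate-bound} in time and then convert the resulting entropy bound into an $L^2$ bound via Lemma~\ref{lem:entropy_bounds_solution_diffrence}. The odd-looking terms in $\mathcal{C}_\text{ballistic}(\alpha)$ and the extra $\mathcal{T}\int_\Gamma \kappa\alpha (T^\text{wall})^2/T_0\,d\Gamma$ suggest that the author does not use the lemma's final perfect-square form directly. Instead I will redo the boundary estimate while keeping the wall temperature terms explicit, with $T_0$ a positive lower bound on the temperature. This lower bound is available from the assumed positivity of thermodynamic variables.

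\textbf{Step 1: isolate the entropy rate.} Since $B=\rho E+T^\text{wall}\mathcal{S}$ and $T^\text{wall}$ is constant, I will write
\begin{equation*}
T^\text{wall}\frac{d}{dt}\int_\Omega \mathcal{S}\,d\Omega = \frac{d}{dt}\int_\Omega B\,d\Omega - \frac{d}{dt}\int_\Omega \rho E\,d\Omega .
\end{equation*}
The first term is bounded by Lemma~\ref{lem:ballistic-rate-bound}. For the second, equation~\eqref{eq:energy-balance} gives the energy balance as a pure boundary integral.

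\textbf{Step 2: combine and bound pointwise on $\Gamma$.} Combining the two contributions and using $1/T\le 1/T_0$, I expect to reach an estimate of the form
\begin{equation*}
T^\text{wall}\frac{d}{dt}\int_\Omega \mathcal{S}\,d\Omega \le \int_\Gamma \frac{\kappa}{4\alpha T}\Big(\frac{\partial T}{\partial n}\Big)^2 + \kappa\frac{T^\text{wall}}{T_0}\frac{\partial T}{\partial n}\,d\Gamma + \int_\Gamma \kappa\alpha\frac{(T^\text{wall})^2}{T_0}\,d\Gamma .
\end{equation*}
Here the $T^\text{wall}\,\partial T/\partial n$ term comes from the heat-entropy flux in~\eqref{eq:entropy-balance-gt}. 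The $\alpha(T^\text{wall})^2/T$ term comes from expanding the penalty contribution and dropping the sign-definite parts. Note that the energy rate itself is bounded in time-integrated form by $\int_\Omega\rho E(0)\,d\Omega$ plus boundary data, which accounts for the $\rho E(0)$ term in~\eqref{eq:l2-bound}. Integrating over $[0,\mathcal{T}]$ then gives
\begin{equation*}
T^\text{wall}\Big(\int_\Omega \mathcal{S}(\mathcal{T})-\mathcal{S}(0)\,d\Omega\Big) \le \int_\Omega \rho E(0)\,d\Omega + \int_0^{\mathcal{T}} \mathcal{C}_\text{ballistic}(\alpha)\,dt + \mathcal{T}T^\text{wall}\int_\Gamma \kappa\alpha \frac{(T^\text{wall})^2}{T_0}\,d\Gamma ,
\end{equation*}
possibly up to where the factor $T^\text{wall}$ is attached to the last term.

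\textbf{Step 3: pass to the affine entropy and conclude.} To apply Lemma~\ref{lem:entropy_bounds_solution_diffrence}, I need the affine entropy $\bar{\mathcal{S}}$ rather than $\mathcal{S}(\mathcal{T})-\mathcal{S}(0)$. They differ by the linear term $\frac{d\mathcal{S}(0)}{dU}^\top(U(\mathcal{T})-U(0))$. I will control its density and momentum components through conservation, since the no-slip wall gives zero mass flux, and absorb the remainder into the data terms.

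Lemma~\ref{lem:entropy_bounds_solution_diffrence} then yields
\begin{equation*}
\int_\Omega\|U(\mathcal{T})-U(0)\|\,d\Omega \le \frac{2\int_\Omega\bar{\mathcal{S}}\,d\Omega}{\lambda_{\min}} .
\end{equation*}
Finally, the triangle inequality $\|U(\mathcal{T})\|\le\|U(\mathcal{T})-U(0)\|+\|U(0)\|$ gives~\eqref{eq:l2-bound}.

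\textbf{Main obstacle.} I expect Step 2 to be the hardest, together with the affine correction in Step 3. The difficulty is getting exactly the stated grouping: the $T^\text{wall}/T_0$ cross term placed inside $\mathcal{C}_\text{ballistic}$, and the standalone $\kappa\alpha (T^\text{wall})^2/T_0$ term outside the $2/(\lambda_{\min}T^\text{wall})$ prefactor. To get there I would try bounding $-\kappa\alpha(T-T^\text{wall})^2/T \le \kappa\alpha (T^\text{wall})^2/T_0$ after discarding the negative quadratic terms in $T$. The linear $\frac{d\mathcal{S}(0)}{dU}$ term also needs care, because the energy component $-1/T(0)$ is not conserved with an isothermal wall.
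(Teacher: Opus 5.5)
There is a genuine gap in Step~2. It comes from reading $T_0$ as a positive lower bound on $T$. In the paper, $T_0$ is the initial temperature entering $\frac{d\mathcal{S}(0)}{dU}$, whose energy component is $-1/T_0$. Both $T^\text{wall}/T_0$ terms come from the affine correction, not from the heat--entropy flux.

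Step~2 as planned cannot deliver your display. Subtracting \eqref{eq:energy-balance} from Lemma~\ref{lem:ballistic-rate-bound} gives
\begin{equation*}
T^\text{wall}\frac{d}{dt}\int_\Omega \mathcal{S}\,d\Omega \le \int_\Gamma \frac{\kappa}{4\alpha T}\Big(\frac{\partial T}{\partial n}\Big)^2 - \kappa\frac{\partial T}{\partial n} + \kappa\alpha\,(T-T^\text{wall})\,d\Gamma ,
\end{equation*}
which reinstates $\kappa\alpha T$. That boundary term has no a priori bound, and it is precisely what the ballistic combination was built to cancel. Nor can $-\kappa\frac{T^\text{wall}}{T}\frac{\partial T}{\partial n}$ be turned into $+\kappa\frac{T^\text{wall}}{T_0}\frac{\partial T}{\partial n}$: the sign is reversed, and because $\partial T/\partial n$ is sign-indefinite, $1/T\le 1/T_0$ yields no inequality. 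Your rate estimate also contains no energy term, so integrating it cannot produce $\int_\Omega \rho E(0)\,d\Omega$. The energy contribution should bring in no boundary data at all, contrary to your ``plus boundary data'' remark.

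The repair is to keep $\rho E$ inside $B$ and never convert its rate into boundary integrals. Integrating Lemma~\ref{lem:ballistic-rate-bound} in time and dropping $\int_\Omega\rho E(\mathcal{T})\,d\Omega\ge 0$ gives
\begin{equation*}
T^\text{wall}\int_\Omega \big(\mathcal{S}(\mathcal{T})-\mathcal{S}(0)\big)\,d\Omega \le \int_\Omega \rho E(0)\,d\Omega + \int_0^{\mathcal{T}}\!\!\int_\Gamma \frac{\kappa}{4\alpha T}\Big(\frac{\partial T}{\partial n}\Big)^2 d\Gamma\,dt .
\end{equation*}
This is the paper's step $\bar{\mathcal{S}}\le B/T^\text{wall}$.

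The remaining terms come from your Step~3, and that is where \eqref{eq:energy-balance} is used. The energy component of $-T^\text{wall}\int_\Omega\frac{d\mathcal{S}(0)}{dU}^\top\big(U(\mathcal{T})-U(0)\big)\,d\Omega$ is $\frac{T^\text{wall}}{T_0}\int_\Omega\big(\rho E(\mathcal{T})-\rho E(0)\big)\,d\Omega$. By the energy balance this equals $\frac{T^\text{wall}}{T_0}\int_0^{\mathcal{T}}\int_\Gamma \big(\kappa\frac{\partial T}{\partial n}-\kappa\alpha(T-T^\text{wall})\big)\,d\Gamma\,dt$. The first piece is the second term of $\mathcal{C}_\text{ballistic}(\alpha)$. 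The second piece is at most $\kappa\alpha(T^\text{wall})^2/T_0$ after dropping $-\kappa\alpha T^\text{wall}T/T_0\le 0$. It does not come from $-\kappa\alpha(T-T^\text{wall})^2/T$, which is simply nonpositive. The paper packages these two steps as a single rate bound on $\rho E+T^\text{wall}\bar{\mathcal{S}}$.

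Your appeal to conservation handles the mass component but not momentum, which is not conserved at a no-slip wall. The paper's bound on this term keeps only the energy component.

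Finally, do not try to engineer the grouping in \eqref{eq:l2-bound}. This argument puts $\mathcal{T}\int_\Gamma\kappa\alpha(T^\text{wall})^2/T_0\,d\Gamma$ inside the bracket multiplied by $2/(\lambda_{\min}T^\text{wall})$, not outside it.
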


\begin{proof}
    The proof goes in two directions; the first establishes that the ballistic energy bounds the affine entropy. Lemma~\ref{lem:ballistic-rate-bound} provides the bound for the Godunov entropy $\mathcal{S}$. The constant initial condition terms of the affine entropy cancel under the time derivative, $\frac{\partial \mathcal{S}(0)}{\partial t}=0$ and $\frac{d \mathcal{S}(0)}{dU}^\top\frac{\partial U(0)}{\partial t}=0$. The rate of the affine entropy results in the additional term
    \begin{equation}
        \int_\Omega - T^\text{wall} \frac{d \mathcal{S}(0)}{dU}^\top \frac{\partial U}{\partial t}\ d\Omega
        \leq
        \int_\Gamma \kappa \frac{T^\text{wall}}{T_0}\frac{\partial T}{\partial n}
        - \kappa \alpha \frac{T^\text{wall}}{T_0} (T - T^\text{wall})\ d\Gamma.
    \end{equation}
    The first term is the second term in $C_\text{ballistic}(\alpha)$, the second is bounded by a constant
    \begin{equation}
        - \kappa \alpha \frac{T^\text{wall}}{T_0} (T - T^\text{wall}) 
        = -\kappa\alpha\frac{T^\text{wall} T}{T_0}
        + \kappa\alpha\frac{\left(T^\text{wall}\right)^2}{T_0}
        \leq \kappa\alpha\frac{\left(T^\text{wall}\right)^2}{T_0}.
    \end{equation}

    The second direction is bounding $\bar{\mathcal{S}}(U)$ by the ballistic energy. Since the total energy is positive,
    \begin{equation}
        \bar{\mathcal{S}}(\mathcal{T}) \leq \frac{\rho E}{T^\text{wall}} + \bar{\mathcal{S}}(\mathcal{T}) = \frac{B}{T^\text{wall}}
    \end{equation}
    
    To achieve the bound on the solution, the left-hand side of the result in Lemma~\ref{lem:entropy_bounds_solution_diffrence} is addressed. Computing the norm of $U(\mathcal{T})=U(\mathcal{T})-U(0)+U(0)$ and by the triangle inequality 
    \begin{equation}
        \int_\Omega \|U(\mathcal{T})\|_{L^2} \, d\Omega
        \leq
        \int_\Omega \|U(\mathcal{T}) - U(0)\|_{L^2} \, d\Omega
        + \int_\Omega \|U(0)\|_{L^2} \, d\Omega.
    \end{equation}

    Putting all the pieces together achieves the bound~\eqref{eq:l2-bound}.
\end{proof}

The affine entropy in this section is for analysis purposes only and is not used in the semi-discrete scheme. It demonstrates that, in addition to the bound on the entropy rate, the bound on the ballistic energy bounds the $L^2$ norm of the solution, thus the important bound to uphold is Lemma~\ref{lem:ballistic-rate-bound}. The result of this section shows that not only does the isothermal wall boundary condition treatment not violate the entropy inequality, it also allows for deriving the $L^2$ bound on the solution.


\section{Entropy-stable high-order scheme}
\label{sec:entropy-stable-high-order-scheme}

The entropy-stable high-order scheme is constructed using SBP operators and SAT procedure to discretize the compressible Navier--Stokes equations in curvilinear coordinates. SBP operators allow for mimicking the integration-by-parts property at the discrete level, resulting in a provably entropy-stable scheme when the boundary conditions are also entropy-stable. The SBP-SAT framework is an analysis tool that allows for a systematic construction of high-order-accurate and provably entropy-stable schemes for the compressible Navier--Stokes equations~\cite{fernandez2014review}. The scheme is built on a discontinuous collocation of Legendre-Gauss-Lobatto (LGL) nodes in each element, and SAT terms are used to weakly enforce the internal interface and boundary conditions. This section will briefly review the SBP operators used in this work, with a focus on the wall boundary conditions derived in~\cite{dalcin2019conservative}, which will be used as a basis for the isothermal wall boundary condition derived in this work.

An SBP operator for the first derivative is defined as a matrix operator with constant coefficients $\mathcal{D} \in \mathbb{R}^{N \times N}$ that is a linear SBP operator of degree $p$ approximating the derivative $\frac{\partial  }{\partial   x_i}$ on the domain $x_i \in \left[ a,b\right]$ with nodal distribution $x$ having $N$ nodes, if
\begin{enumerate}
    \item $\mathcal{D}x^j_l=jx^{j-1}$, $j = 0,1,\cdots,p; $
    \item $\mathcal{D} = \mathcal{P}^{-1}\mathcal{Q}$, where the norm matrix $\mathcal{P}_{}$ is symmetric positive-definite;
    \item $\mathcal{Q}+\mathcal{Q}^{\top}=\mathcal{B}$, where $\mathcal{B}=\mathrm{diag}\left[-1,0,\cdots,0,1\right]$.
\end{enumerate}
See~\cite{fernandez2014review} for further details.

In addition to the LGL nodes, the scheme uses intermediate flux points defined by the $ p+1$-collocation complementary grid. These flux points form a complementary grid with spacing equal to the diagonal elements of the matrix $\mathcal{P}$, 
\begin{equation}
    \bar{x}_{i+1} - \bar{x}_i = \mathcal{P}_{ii}.
\end{equation}
Any 1D SBP differentiation operator $\mathcal{D}$ can be equivalently written in a telescoping form as follows.
\begin{equation}
    \mathcal{D}\mathbf{f} = \mathcal{P}^{-1} \Delta \bar{\mathbf{f}},
\end{equation}
where $\Delta$ is an $N_p \times (N_p+1)$ matrix corresponding to the two-point backward difference operator, and $\bar{\mathbf{f}}$ is $p$th-order flux vector defined at the flux points~\cite{fisher2013discretely,fisher2011boundary,carpenter2014entropy}. The one-dimensional SBP operators can be straightforwardly extended to multiple dimensions using tensor-product arithmetic \cite{carpenter2016entropy}. The multidimensional SBP operators defined in the computational domain are denoted with subscripts $\cdot_d$, where $d=1,2,3$. 


\subsection{Semi-discrete Navier--Stokes equations}

Using an SBP operator and its equivalent telescoping form, $\Delta_{x_I}$, in~\cite{parsani2014entropy}, the semi-discrete Navier--Stokes equations are written as follows.
\begin{equation}
\frac{\partial \mathbf{U}}{\partial t} 
= - \mathcal{P}^{-1}\Delta_{x_i}\bar{f}_i^{(I)}
+ \mathcal{D}_i [\mat{C}_{ij}] \Theta_{x_j}
+ \mathcal{P}^{-1} g^{U}_{x_i},
\end{equation}
where $\mat{C}_{ij}$ is the viscous flux matrix, and $g^{U}_{x_i}$ are the penalties associated with the conservative variables. $\Theta_{x_j}$ is the gradient of entropy variables computed using the procedure based on the local discontinuous Galerkin (LDG) method and internal penalties with an entropy gradient penalty developed in Dalcin 2019 \cite{dalcin2019conservative}, 
\begin{equation}
\Theta_{x_j}
= \mathcal{D}_{x_j} \mathbf{W} 
+ \mathcal{P}^{-1} g^{\Theta}_{x_j},
\end{equation}
where $g^{\Theta}_{x_j}$ is the penalty associated with the entropy variables. The penalties $g_{x_i}^\cdot$ are interchangable between boundary $g^{(B),\cdot}_{x_i}$ and internal $g^{(In),\cdot}_{x_i}$.

For high-speed viscous flows, the high-order scheme may suffer from the Gibbs phenomenon due to shocks and discontinuities. Thus, the positivity-preserving scheme developed in~\cite{upperman2023first,yamaleev2023high} is used in those cases. First, the entropy-stable high-order scheme is augmented with artificial dissipation based on the Brenner--Navier--Stokes equations~\cite{brenner2005navier,feireisl2010new}. This regularized Navier--Stokes model provides provable positivity of thermodynamic variables, the existence of weak solutions, and satisfaction of the entropy inequality for the Godunov entropy pair. The artificial dissipation is defined as an additional viscous flux term in the Navier--Stokes equations,
\begin{equation}
    F^{(AD)} = F^{(V)}(\mu_{AD}) + \sigma \frac{\partial \rho}{\partial x_i}\begin{bmatrix}
        1 \\ v \\ E
    \end{bmatrix},
\end{equation}
where $\mu_{AD}$ is defined as an entropy-residual-$S_n^k(R(U))$-scaled max viscosity $\mu_{AD} \sim |\Delta v| + \Delta \sqrt{p}$, and $\sigma$ is the volume diffusivity. The entropy residual detects spurious entropy production, distinguishes between different waves, and is design-order. This sensor only kicks in if the entropy production exceeds a certain tolerance. The resulting artificial dissipation can resolve jumps but would not be enough to avoid the Gibbs phenomenon, a byproduct of the high-order scheme. The high-order scheme will exhibit the Gibbs phenomenon and lead to undershoots in the solution. Thus, a first-order scheme is built on the same LGL points, and a hybrid scheme is used to switch between the high-order and first-order schemes based on the flux limiter $\theta_f$ as follows.
\begin{equation}
    \left(\frac{d\hat{\bm U}}{dt}\right)^n
    = \theta_f \left(\frac{d\hat{\bm U}}{dt}\right)^n_1
    + (1-\theta_f) \left(\frac{d\hat{\bm U}}{dt}\right)^n_p.
\end{equation}
The flux limiter $0 < \theta_f < 1$ is constant on a given high-order element and is nonzero only if any thermodynamic variable is negative. $\theta_f$ depends on the entropy residual $S_n^k$ to maintain pointwise positivity. This hybrid scheme is conservative, entropy-stable, satisfies the GCL condition, and is design-order. The full discussion of the positivity-preserving scheme is available in~\cite{upperman2021high,yamaleev2023high}.


\subsection{Entropy-stable wall boundary conditions}

Utilizing the common SAT procedure, the wall boundary conditions are imposed weakly via the penalty terms $g^{(B),U}_{x_i}$ and $g^{(B),\Theta}_{x_i}$. These entropy-stable boiundary conditions were first introduced in~\cite{parsani2014entropy}, providing the no-penetration wall used in this implementation. Later in~\cite{dalcin2019conservative} an LDG based approach was developed for the entropy gradient. In addition, these works extended the viscous wall boundary conditions to a moving adiabatic solid wall or a wall with a prescribed heat flux.

In the discussion of wall boundary conditions hereafter, we simplify the element to a $[0,1]^3$ cube with the wall boundary at $x_1=0$. Subsequently, the SBP operator $\mathcal{B}_{x_1}$ is decomposed into 
\begin{equation}
    \mathcal{B}_{x_1} = \mathcal{B}_{x_1}^- + \mathcal{B}_{x_1}^+
    = \text{diag}(-1,0,\cdots,0) + \text{diag}(0,0,\cdots,1).
\end{equation}
Thus, the operator $\mathcal{B}_{x_1}^-$ selects the wall boundary at $x_1=0$. Additionally, we omit the curvilinear metrics for simplicity, and assume that the wall is aligned with the $x_1$ direction. The following subsections summarize the wall boundary conditions implemented in Dalcin 2019 \cite{dalcin2019conservative}.

The boundary penalties implemented herein and based on\cite{dalcin2019conservative} are defined as follows:
\begin{itemize}
    \item $g^{(B),U}_{x_i}$ is the sum of the inviscid and viscous flux penalties, the dissipative matrix penalty, and the heat-entropy flux source term
    \begin{equation}
        g^{(B),U}_{x_i}
        = g^{(B,I)}_{x_i} 
        + g^{(B,V)}_{x_i} 
        + \mathcal{M}^{(B,V)}_{x_i} 
        + \mathcal{L}^{(B,V)}_{x_i},
    \end{equation}
    \item $g^{(B),\Theta}_{x_i}$ is the penalty for the entropy variables, and it penalizes the deviation from the entropy variables state at the wall.
    \begin{equation}
        g^{(B),\Theta}_{x_i}
        = \frac{1}{2}\mathcal{B}_{x_i}^-\left(
        \mathbf{W} - \mathbf{W}^\text{wall}
        \right),
    \end{equation}
    where the entropy variables at the wall $\mathbf{W}^\text{wall}$ is consistent with the viscous wall state $V^{V,wall}$ for the intended boundary condition.
\end{itemize}
Each of the terms in the boundary penalties is described individually next.

\textbf{1)} The no-penetration wall is enforced by setting the primitive wall state as
\begin{equation}
    V^{I,wall} = \text{diag}\left[1, -1, 1, 1, 1\right] V,
\end{equation}
and penalizing the inviscid flux against the entropy stable two-point flux for the internal state, $V$ and the wall state $V^{I,wall}$. The penalty is given by
\begin{equation}
    g^{(B,I)}_{x_1} = \mathcal{B}_{x_1}^- \left(
        f_{x_1} - f_{x_1}^{ssr}(V, V^{I,wall})
    \right),
\end{equation}
where $f_{x_1}^{ssr}$ is an entropy dissipative (stable) two-point entropy flux. In the current implementation, we use the Merriam flux \cite{merriam1989entropy}. If the wall is intended to enforce the symmetry conditions, a corresponding viscous wall state is constructed, and the penalty is applied to the viscous fluxes as well.

\textbf{2)} The no-slip or (inspirational) moving adiabatic wall is enforced using the no-penetration wall condition for the inviscid fluxes and further setting the viscous primitive wall state as
\begin{equation}
    V^{V,wall} = [\rho, -v_1, -v_2 + 2v_2^\text{wall}, -v_3 + 2v_3^\text{wall}, T]^\top,
\end{equation}
and further computing the gradient of the entropy variables by reflecting the primitive gradient density and temperature gradients,
\begin{equation}
    \Theta^\text{wall}_{x_j}
    = \frac{d \mathbf{W}}{d \mathbf{V}}\Bigr|_{V = V^{V,wall}} \Pi^\text{wall}_{x_j},
    \quad
    \Pi^\text{wall}_{x_j} = \text{diag}\left[-1, 1, 1, 1, -1\right] \Pi_{x_j},
    \quad
    \Pi_{x_j} = \frac{d \mathbf{W}}{d \mathbf{V}} \Theta_{x_j},
\end{equation}
where the final change-of-variables is computed using the primitive wall state $V^{V,wall}$. Finally, the no-slip or moving adiabatic wall is enforced via the penalty
\begin{equation}
    g^{(B,V)}_{x_1} = \frac{1}{2}\mathcal{B}_{x_1}^- \left(
        [\mat{C}_{1j}] \Theta_{x_j} - [\mat{C}_{1j}]^\text{wall} \Theta_{x_j}^\text{wall}
    \right),
\end{equation}
where $[\mat{C}_{1j}]^\text{wall}$ and $\Theta_{x_j}^\text{wall}$ are constructed using the primitive wall state $V^{V,wall}$. Additionally, this viscous wall state is used to compute the LDG penalty for the entropy gradient to compute the entropy variables at the wall $\mathbf{W}^\text{wall}$, $g^{(B),\Theta}_{x_1}$.

\textbf{3)} To further ensure the entropy stability of this scheme, an additional dissipative matrix penalty is added, using the viscous wall state $V^{V,wall}$ to compute the entropy variables at the wall $\mathbf{W}^\text{wall}$ in addition to the viscous matrix $[\mat{C}_{11}]^\text{wall}$, and the penalty is given by
\begin{equation}
    \mathcal{M}^{(B,V)}_{x_1} = -\beta 
    \frac{[\mat{C}_{11}] + [\mat{C}_{11}]^\text{wall}}{2} \left(
        \mathbf{W} - \mathbf{W}^\text{wall}
    \right),
\end{equation}
where $\beta$ scales with the inverse of the first element of the SBP norm matrix $\mathcal{P}$.

\textbf{4)} The final penalty is a heat entropy flux source term that applies to the energy equation defined for a heat flux $g(T)$ as
\begin{equation}
    \mathcal{L}^{(B,V)}_{x_1} 
    = \mathcal B_{x_1}^{-}[0,0,0,0,1]^\top g(T),
\end{equation}
where $g(T)$ is an $L^2$ function. Here, $g(T)$ can be chosen as any typical heat flux, such as the case presented in~\cite{dalcin2019conservative} for a wall with a prescribed heat flux.

\begin{note}
    The representation of $g(t)$ in~\cite{dalcin2019conservative} assumes a heat-entropy-flux absorbing the $1/T$ term, and thus, the heat-entropy-flux is defined as $g(t) = q^\text{wall}/T$. In this work, we define the heat flux as $g(T) = q^\text{wall}$, and thus, the $1/T$ term is explicitly present in the entropy balance. This allows for a more general definition of the heat flux at the wall.
\end{note}


\subsection{The isothermal wall boundary condition}
\label{subsec:isothermal-wall-heat-flux}

The present approach imposes the penalty as a heat-flux consistent with the isothermal boundary condition. The heat-flux correction term $g(t)$ is applied as a source term that results in a penalty in the energy equation as follows. 
\begin{align*}
    \mathcal{L}^{(B,V)}_{x_1} 
    = \mathcal{B}_{x_1}^- [0,0,0,0,1]^\top g(t)
    &= \mathcal{B}_{x_1}^- [0,0,0,0,1]^\top \left(-\kappa \alpha(T - T^\text{wall})\right).
\end{align*}

\begin{remark}
    The term $\alpha$ is a penalty parameter that can be tuned to ensure that the penalty does not exceed the physical heat-entropy flux at the wall. To enforce the design-order properties of the scheme, $\alpha$ is set to scale with the inverse of the grid spacing, $\Delta x$, to the power of the polynomial degree $p$. However, due to the increased stiffness, $\alpha$ can be limited to a smaller value by choosing the average spacing $\Delta x_\text{avg}$ and/or capping the polynomial power to a smaller integer $q$.
\end{remark}


\subsubsection{Contribution to the semi-discrete entropy balance}

SBP operators allow for mimicking the integration-by-parts property at the discrete level, resulting in a discrete entropy balance that mimics the continuous entropy inequality. By multiplying the semi-discrete Navier--Stokes equations by the entropy variables $\mathbf{W}$, and integrating using the quadrature matrix $\mathcal{P}$, the semi-discrete entropy balance is derived as follows.
\begin{align*}
    W^\top \mathcal{P} \frac{\partial U}{\partial t}
    &= - W^\top \mathcal{P} \mathcal{P}^{-1} \Delta_{x_i} \bar{f}_i^{(I)}\\
    &+ W^\top \mathcal{P} \mathcal{D}_i [\mat{C}_{ij}] \Theta_{x_j}\\
    &+ W^\top \mathcal{P} \mathcal{P}^{-1} g^{U}_{x_i}
\end{align*}
The first term on the right-hand side is the inviscid contribution to the entropy balance, and the last term is the conservative penalty contribution on the element surface. The second term is of interest and can be further decomposed as follows.
\begin{align*}
    W^\top \mathcal{P} \mathcal{D}_i &[\mat{C}_{ij}] \Theta_{x_j}
    = W^\top \mathcal{P} \mathcal{P}^{-1}_i \mathcal{Q}_i [\mat{C}_{ij}] \Theta_{x_j} \\
    &= W^\top \mathcal{P} \mathcal{P}^{-1}_i \mathcal{Q}_i [\mat{C}_{ij}] \Theta_{x_j}
    + W^\top \mathcal{P} \mathcal{P}^{-1}_i \mathcal{Q}^\top_i [\mat{C}_{ij}] \Theta_{x_j}
    - W^\top \mathcal{P} \mathcal{P}^{-1}_i \mathcal{Q}^\top_i [\mat{C}_{ij}] \Theta_{x_j}\\
    &= W^\top \mathcal{P} \mathcal{P}^{-1}_i \mathcal{B}_i [\mat{C}_{ij}] \Theta_{x_j}
    - W^\top \mathcal{P} \mathcal{P}^{-1}_i \mathcal{Q}^\top_i [\mat{C}_{ij}] \Theta_{x_j}
\end{align*}
The first term in this relation is the viscous contribution to the entropy balance on the surface and includes the wall heat flux, which will be discussed in the next paragraph. The latter term can be further decomposed as follows.
\begin{align*}
    W^\top \mathcal{P} \mathcal{P}^{-1}_i \mathcal{Q}^\top_i &[\mat{C}_{ij}] \Theta_{x_j}\\
    &= \left([\mat{C}_{ij}] \Theta_{x_j}\right)^\top \mathcal{P} \mathcal{D}_i W
    + \left([\mat{C}_{ij}] \Theta_{x_j}\right)^\top \mathcal{P} \mathcal{P}^{-1}_i g^{\Theta}_{x_i}
    - \left([\mat{C}_{ij}] \Theta_{x_j}\right)^\top \mathcal{P} \mathcal{P}^{-1}_i g^{\Theta}_{x_i}\\
    &= \left([\mat{C}_{ij}] \Theta_{x_j}\right)^\top \mathcal{P} 
    \left(\mathcal{D}_i W + \mathcal{P}^{-1}_i g^{\Theta}_{x_i}\right)
    - \left([\mat{C}_{ij}] \Theta_{x_j}\right)^\top \mathcal{P} \mathcal{P}^{-1}_i g^{\Theta}_{x_i}\\
    &= \left([\mat{C}_{ij}] \Theta_{x_j}\right)^\top \mathcal{P} \Theta_{x_i}
    - \Theta_{x_j}^\top \mathcal{P} [\mat{C}_{ij}] \mathcal{P}^{-1}_i g^{\Theta}_{x_i}
\end{align*}
The first term is positive-definite and represents the viscous dissipation in the entropy balance, while the second term is the contribution from the entropy gradient penalty. The resulting entropy balance is given as follows.
\begin{align*}
    1^\top \mathcal{P} \frac{\partial \mathcal{S}}{\partial t} 
    + \Theta_{x_j}^\top \mathcal{P} [\mat{C}_{ij}] \Theta_{x_j}
    &= 1^\top \mathcal{P} \mathcal{P}^{-1} \mathcal{B}_{x_i} \mathcal{F}_i
    && \text{(Inviscid)}\\
    &+ \mathbf{W}^\top \mathcal{P} \mathcal{P}^{-1} \mathcal{B}_{x_i} [\mat{C}_{ij}] \Theta_{x_j}
    && \text{(Viscous)}\\
    &+ \mathbf{W}^\top \mathcal{P} \mathcal{P}^{-1} g^{U}_{x_i}
    && \text{(Conservative penalty)}\\
    &+ \left([\mat{C}_{ij}] \Theta_{x_j}\right)^\top \mathcal{P} \mathcal{P}^{-1} g^{\Theta}_{x_i},
    && \text{(Entropy gradient penalty)}
\end{align*}
where the first term is due to the entropy-consistent flux satisfying the relation $\Delta_{x_i} \bar{f}_i^{(I)} = \mathcal{B}_{x_i} \mathcal{F}_i$~\cite{fisher2013discretely}.

To reveal the wall heat flux, the viscous matrix is decomposed into velocity and temperature gradient components, respectively. 
\begin{equation}
    [C_{ij}] 
    = \begin{bmatrix}
        0 & 0 & 0 & 0 & 0\\
        0 & \frac43\mu T & 0 & 0 & \frac43\mu T v_1\\
        0 & 0 & \mu T & 0 & \mu T v_2\\
        0 & 0 & 0 & \mu T & \mu T v_3\\
        0 & \frac43\mu T v_1 & \mu T v_2 & \mu T v_3 & \mu T (\frac43v_1^2 + v_2^2 + v_3^2)
    \end{bmatrix}
    + \begin{bmatrix}
        0 & 0 & 0 & 0 & 0\\
        0 & 0 & 0 & 0 & 0\\
        0 & 0 & 0 & 0 & 0\\
        0 & 0 & 0 & 0 & 0\\
        0 & 0 & 0 & 0 & \kappa T^2
    \end{bmatrix}.
\end{equation}
Thus, the contribution of the viscous terms to the entropy balance is computed as follows.
\begin{align*}
    W^\top \mathcal{P} \mathcal{P}^{-1}_i \mathcal{B}_i [\mat{C}_{ij}] \Theta_{x_j}
    &= W^\top \mathcal{P} \mathcal{P}^{-1}_i \mathcal{B}_i \Lambda
    + W^\top \mathcal{P} \mathcal{P}^{-1}_i \mathcal{B}_i [0, 0, 0, 0, 1]^\top \kappa \frac{\partial T}{\partial x_i}
\end{align*}
where $\Lambda$ is the contribution of the velocity gradients to the viscous flux. To simplify the display, let $L^-= \mathcal{P} \mathcal{P}^{-1}_i \mathcal{B}_i^- [0, 0, 0, 0, 1]^\top$. Taking the second term on the $x_i=0$ face and adding the defined penalty, the desired isothermal wall heat flux id obtained as follows.
\begin{equation}
    \label{eq:semidiscrete_dSdt_isothermal}
      W^\top L^- \kappa \frac{\partial T}{\partial x_i}
      + W^\top L^- g(t)
      = W^\top L^- \left(\kappa \frac{\partial T}{\partial x_i} - \kappa \alpha (T - T^\text{wall})\right)
\end{equation}
Therefore, the resulting entropy balance on the wall face is written as follows.
\begin{align*}
    1^\top \mathcal{P} \frac{\partial \mathcal{S}}{\partial t} 
    &+ \Theta_{x_j}^\top \mathcal{P} [\mat{C}_{ij}] \Theta_{x_j}
    = 1^\top \mathcal{P} \mathcal{P}^{-1} \mathcal{B}_{x_i}^- \mathcal{F}_i
    && \text{(Inviscid)}\\
    &+ \mathbf{W}^\top \mathcal{P} \mathcal{P}^{-1} \mathcal{B}_{x_i}^- \Lambda
    && \text{(Viscous w/o heat-flux)}\\
    &+ \mathbf{W}^\top \mathcal{P} \mathcal{P}^{-1} \left(
        g^{(B,I)}_{x_i} 
      + g^{(B,V)}_{x_i} 
      + \mathcal{M}^{(B,V)}_{x_i}\right)
    && \text{(Conservative penalties)}\\
    &+ \left([\mat{C}_{ij}] \Theta_{x_j}\right)^\top \mathcal{P} \mathcal{P}^{-1} g^{\Theta}_{x_i}.
    && \text{(Entropy gradient penalty)}\\
    &+ W^\top L^- \kappa \left(\kappa \frac{\partial T}{\partial x_i} - \kappa \alpha (T - T^\text{wall})\right)
    && \text{(Isothermal wall heat-flux)}\\
\end{align*}
Upon convergence, up to tolerance, of the wall nodes to the no-slip (or moving) isothermal wall, 
\begin{equation}
    v_i = 0, \quad\quad v_{j\neq i} = v^\text{wall}_{j\neq i} \quad\quad T = T^\text{wall},
\end{equation}
all the surface terms vanish except for $\frac{\partial T}{\partial x_i}$, since it generally does not vanish for the isothermal condition, thus the resulting entropy balance is given as follows.
\begin{align*}
    1^\top \mathcal{P} \frac{\partial \mathcal{S}}{\partial t} 
    + \Theta_{x_j}^\top \mathcal{P} [\mat{C}_{ij}] \Theta_{x_j}
    &= -\mathbf{1}^\top L^- \kappa \frac{1}{T} \frac{\partial T}{\partial x_i},
\end{align*}
a discrete analog to the continuous entropy balance.


\subsubsection{Contribution to the semi-discrete ballistic energy}

Similar to the entropy balance, the semi-discrete ballistic energy balance can be derived using SBP operators to achieve a desirable bound on the solution. The semi-discrete ballistic energy rate on a given element is expressed as follows.
\begin{equation}
  \mathbf{1}^\top \mathcal{P} \frac{\partial B}{\partial t}
  = [0, 0, 0, 0, 1] \mathcal{P} \frac{\partial (\rho E)}{\partial t}
  + {\mathbf{T}^\text{wall}}^\top \mathcal{P} \frac{\partial S}{\partial t}.
\end{equation}
The following Lemma establishes the semi-discrete counterpart bound.

\begin{lemma}[Semi-discrete ballistic rate bound]
    \label{lem:semi-discrete-ballistic-rate-bound}
    Assuming the positivity of the thermodynamic variables and the wall boundary conditions are satisfied up to design-order except for the isothermal condition,
    \begin{equation}
        v_i = 0, \quad\quad v_{j\neq i} = v^\text{wall}_{j\neq i} \quad\quad T \neq T^\text{wall},
    \end{equation}
    the semi-discrete ballistic energy is bounded as follows.
    \begin{equation}
        \label{eq:semi-discrete-ballistic-rate-bound}
        \mathbf{1}^\top \mathcal{P} \frac{\partial B}{\partial t}
        \leq \mathbf{1}^\top L^-
        \left(
            \kappa\frac{1}{4 \alpha T} \left(\frac{\partial T}{\partial n}\right)^2
        \right).
    \end{equation}
\end{lemma}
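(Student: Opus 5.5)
The semi-discrete bound follows the continuous proof of Lemma~\ref{lem:ballistic-rate-bound}, with the wall integrals replaced by the boundary quadrature $\mathbf{1}^\top L^-(\cdot)$. I would first compute the two ingredients of $\mathbf{1}^\top \mathcal{P}\,\partial_t B$ separately: the discrete total-energy rate $[0,0,0,0,1]\mathcal{P}\,\partial_t(\rho E)$ and the discrete entropy rate weighted by $T^\text{wall}$.

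For the energy rate, I would contract the semi-discrete Navier--Stokes equations with the fifth unit vector and apply the SBP property $\mathcal{Q}+\mathcal{Q}^\top=\mathcal{B}$. Interior volume terms telescope, leaving boundary fluxes on the wall face. The no-penetration condition $v_i=0$ kills the inviscid energy flux. The no-slip/moving-wall condition, together with the reflected viscous state $V^{V,wall}$, makes the work term $\tau_{ji}v_j$ and the penalties $g^{(B,I)}$, $g^{(B,V)}$ and the velocity components of $\mathcal{M}^{(B,V)}$ vanish, up to design order. What remains is
\begin{equation*}
\mathbf{1}^\top L^-\left(\kappa \frac{\partial T}{\partial n} - \kappa\alpha (T-T^\text{wall})\right),
\end{equation*}
which is the discrete analogue of \eqref{eq:energy-balance}.

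For the entropy rate, I would take the wall-face entropy balance derived just above the lemma. Under the stated assumptions, every surface term except the isothermal wall heat flux vanishes, because the velocity-gradient contribution $\Lambda$ is multiplied by entropy variables at a state with $v=v^\text{wall}$. The viscous dissipation term $\Theta^\top\mathcal{P}[\mat{C}]\Theta\ge 0$ can be dropped to give an inequality. Using $W_5=-1/T$ in the term $W^\top L^- g$ of \eqref{eq:semidiscrete_dSdt_isothermal} then gives
\begin{equation*}
\mathbf{1}^\top\mathcal{P}\,\partial_t\mathcal{S}\le \mathbf{1}^\top L^-\left(-\frac{\kappa}{T}\frac{\partial T}{\partial n}+\kappa\alpha\frac{T-T^\text{wall}}{T}\right).
\end{equation*}
Multiplying by the constant $T^\text{wall}>0$ (constant along the wall, so it commutes with the diagonal quadrature) and adding the energy identity yields, nodewise on the wall face,
\begin{equation*}
\kappa\frac{T-T^\text{wall}}{T}\frac{\partial T}{\partial n}-\kappa\alpha\frac{(T-T^\text{wall})^2}{T}.
\end{equation*}

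The final step is the same completion of the square as in Lemma~\ref{lem:ballistic-rate-bound}, done pointwise at each wall node. Since $L^-$ has nonnegative weights (it is built from the diagonal positive $\mathcal{P}$ and $-\mathcal{B}^-$, oriented by the outward normal) and $T>0$ by assumption, subtracting $\frac{\kappa}{T}PS\ge 0$ at every node gives \eqref{eq:semi-discrete-ballistic-rate-bound}.

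I expect the main obstacle to be the first step: rigorously showing that the dissipative matrix penalty $\mathcal{M}^{(B,V)}$ and the entropy-gradient penalty $g^{(B),\Theta}$ contribute nothing, or only nonpositive terms, to the energy balance as well as the entropy balance. The temperature components of $\mathbf{W}-\mathbf{W}^\text{wall}$ do not vanish when $T\neq T^\text{wall}$. Note that $V^{V,wall}$ copies $T$, so $W_5-W^\text{wall}_5=0$ anyway. I would exploit this to show these penalties reduce to velocity-jump terms, which vanish under the no-slip assumption.
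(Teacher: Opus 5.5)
Your overall route is the paper's route. The paper's proof also adds the wall energy flux $\kappa\,\partial T/\partial n-\kappa\alpha(T-T^\text{wall})$ to $T^\text{wall}$ times the wall entropy flux from \eqref{eq:semidiscrete_dSdt_isothermal}, then repeats node by node the completion of the square from Lemma~\ref{lem:ballistic-rate-bound}. Two of your additions are sound and go beyond what the paper writes:
\begin{itemize}
\item Since $V^{V,wall}$ copies $\rho$ and $T$, $\mathbf{W}-\mathbf{W}^\text{wall}$ is a pure velocity jump. So $\mathcal{M}^{(B,V)}$ and $g^{(B),\Theta}$ do vanish under the no-slip assumption.
\item Reading $L^-$ as a nonnegative, outward-oriented surface quadrature is what is needed, by you and tacitly by the paper, to discard $-\frac{\kappa}{T}PS$. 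Taken literally, $\mathcal{B}^-_{x_1}=\mathrm{diag}(-1,0,\dots,0)$ would make those weights nonpositive.
\end{itemize}

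The step that fails is your claim that the no-slip/moving-wall condition removes the work term $\tau_{ji}v_j$ from the energy rate. At a wall node the energy component of the velocity-gradient flux is $[0,0,0,0,1]\Lambda=v_j\tau_{j1}=v^\text{wall}_2\tau_{21}+v^\text{wall}_3\tau_{31}$. When the wall moves this is $\mathcal{O}(1)$, not a design-order residual. The penalty $g^{(B,V)}$ does not remove it either: at convergence the wall state has the same velocity and the same unreflected velocity gradients, so the work parts cancel inside the penalty. In the entropy balance the term is absent for a reason other than the one you give. We have $\mathbf{W}^\top\Lambda=0$ identically, because $\Lambda_5=v_j\Lambda_{j+1}$, $W_{j+1}=v_j/T$ and $W_5=-1/T$; this holds regardless of any boundary condition. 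Consequently $T^\text{wall}\mathcal{S}$ cannot absorb the wall work. For $v^\text{wall}\neq0$ the ballistic rate carries an extra surface term $v^\text{wall}_j\tau_{jn}$ that the right-hand side of \eqref{eq:semi-discrete-ballistic-rate-bound} does not control.

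This is not a repairable gap in the argument; the inequality itself fails. Take a Couette start-up with walls at $\pm U$, $T\equiv T^\text{wall}$ and $u=U(2y-1)^3$. The right-hand side is zero, while the rate equals wall work minus viscous dissipation, $\int_0^1\mu u u''\,dy=\tfrac{24}{5}\mu U^2>0$ per unit length; this is already true in the continuous setting, and the semi-discrete rate is a consistent approximation of it.

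The paper's proof makes the same omission: it keeps only heat-flux terms, exactly as in \eqref{eq:energy-balance}. So this is a defect you share with the paper, not a departure from it. Your argument, like the paper's, is complete for a stationary wall $v^\text{wall}=0$. A moving wall requires either adding the wall-work term to the bound or measuring kinetic energy relative to the wall velocity, as in the Chaudhuri--Feireisl ballistic energy.
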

\begin{proof}
    Equation~\eqref{eq:semidiscrete_dSdt_isothermal}, and the added penalty term to the energy equation provide,
    \begin{equation*}
        \mathbf{1}^\top \mathcal{P} \frac{\partial B}{\partial t}
        \leq \mathbf{1}^\top L^-
        \left(
            \kappa \frac{\partial T}{\partial x_i} - \kappa \alpha (T - T^\text{wall})
            - \kappa \frac{T^\text{wall}}{T} \frac{\partial T}{\partial x_i} + \kappa \alpha T^\text{wall} \frac{T - T^\text{wall}}{T}
        \right).
    \end{equation*}
    The remaining algebra is the same as in Lemma~\ref{lem:ballistic-rate-bound}, resulting in the following semi-discrete bound.
    \begin{equation}
        \mathbf{1}^\top \mathcal{P} \frac{\partial B}{\partial t}
        \leq \mathbf{1}^\top L^-
        \left(
            -\kappa\frac{1}{T}
            \left[
                \left(
                    - \frac{1}{2\sqrt{\alpha}} \frac{\partial T}{\partial n} 
                    + \sqrt{\alpha} (T - T^\text{wall})
                \right)^2
                - \frac{1}{4\alpha} \left(\frac{\partial T}{\partial n}\right)^2
            \right]
        \right).
    \end{equation}
    Distributing the negative signs and absorbing the non-positive term under the inequality provides the desired bound~\eqref{eq:semi-discrete-ballistic-rate-bound}.
\end{proof}

The assumption of positivity of thermodynamic variables is possible numerically as well, provided that a positivity-preserving scheme is used. In this paper, the positivity-preserving schemes in~\cite{yamaleev2023high,upperman2023first} are used in cases prone to positivity violations with the baseline schemes.

The SBP-SAT framework allows for mimicking the continuous integration and differentiation properties. Using these operators, a semi-discrete version of the result in Theorem~\ref{thm:ballistic-energy-bound} is presented next for the affine entropy used for analysis purposes only.

\begin{theorem}[Semi-discrete solution $L^2$ bound]
    \label{thm:semi-discrete-solution-l2-bound}
    Assume the positivity of thermodynamic variables under a positivity-preserving scheme. If the boundary term,
    \begin{equation}
        \mathcal{C}_\text{ballistic}(\alpha)
        = \mathbf{1}^\top L^-
        \left(
            \frac{\kappa}{4\alpha T} \left(\frac{\partial T}{\partial n}\right)^2
            + \kappa \frac{T^\text{wall}}{T_0}\frac{\partial T}{\partial n}
        \right)
        < \infty,
    \end{equation}
    is bounded, then the semi-discrete solution of the compressible Navier--Stokes equations with the isothermal wall boundary condition is bounded in the $L^2$ norm by initial and boundary data
    \begin{align}\label{eq:semi-discrete-solution-bound}
        \begin{split}
            \|U(\mathcal{T})\|_{\mathcal{P}}^2
            &\leq
            \frac{2}{\lambda_\text{min}\left(\frac{d^2 S(\bar(U))}{d U^2}\right)}
            \left[[0, 0, 0, 0, 1] \mathcal{P} \rho E(0) + \int_0^\mathcal{T} \mathcal{C}_\text{ballistic}(\alpha)\ dt\right]\\
            &+\|U(0)\|_{\mathcal{P}}^2 + \mathcal{T} \mathbf{1}^\top L^- \left(\kappa \alpha \frac{(T^\text{wall})^2}{T0}\right)\\
        \end{split}
    \end{align}
\end{theorem}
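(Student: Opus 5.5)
The plan is to follow the proof of Theorem~\ref{thm:ballistic-energy-bound} line by line, with every continuous integral replaced by its SBP counterpart. Domain integrals become contractions with the norm matrix $\mathcal{P}$, and boundary integrals become the surface operator $\mathbf{1}^\top L^-$. The starting point is Lemma~\ref{lem:semi-discrete-ballistic-rate-bound}, which bounds the semi-discrete rate of $B=\rho E+T^\text{wall}\mathcal{S}$.

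\textbf{Step 1: pass from $\mathcal{S}$ to the affine entropy.} I replace $\mathcal{S}$ by the affine entropy $\bar{\mathcal{S}}$ of Lemma~\ref{lem:entropy_bounds_solution_diffrence}, applied nodewise. The constant terms $\mathcal{S}(0)$ and $\frac{d\mathcal{S}(0)}{dU}^\top U(0)$ have zero time derivative. The only new contribution is $-T^\text{wall}\,\mathbf{W}(0)^\top\mathcal{P}\frac{\partial U}{\partial t}$.

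\textbf{Step 2: evaluate the new contribution.} I substitute the semi-discrete equations into this term. Because $\mathbf{W}(0)$ is fixed, the SBP property and the telescoping form collapse the interior terms. Using the same computation that produced \eqref{eq:semidiscrete_dSdt_isothermal}, and under the no-slip assumptions of Lemma~\ref{lem:semi-discrete-ballistic-rate-bound}, only the energy-equation surface terms survive. The fifth component of $\mathbf{W}(0)$ is $-1/T_0$, so the contribution is
\[
\mathbf{1}^\top L^-\Big(\kappa\tfrac{T^\text{wall}}{T_0}\tfrac{\partial T}{\partial n}-\kappa\alpha\tfrac{T^\text{wall}}{T_0}(T-T^\text{wall})\Big).
\]
By positivity of $T$, the second summand is at most $\kappa\alpha (T^\text{wall})^2/T_0$ pointwise. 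Adding this to the bound of Lemma~\ref{lem:semi-discrete-ballistic-rate-bound} gives
\[
\tfrac{d}{dt}\,\mathbf{1}^\top\mathcal{P}\bar B\le \mathcal{C}_\text{ballistic}(\alpha)+\mathbf{1}^\top L^-\big(\kappa\alpha (T^\text{wall})^2/T_0\big),
\]
where $\bar B=\rho E+T^\text{wall}\bar{\mathcal{S}}$.

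\textbf{Step 3: integrate in time.} I integrate over $[0,\mathcal{T}]$. Since $\bar{\mathcal{S}}(0)=0$, the initial ballistic energy is just $[0,0,0,0,1]\mathcal{P}\rho E(0)$. The constant wall term integrates to $\mathcal{T}\,\mathbf{1}^\top L^-(\cdot)$.

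\textbf{Step 4: recover the solution norm.} Positivity of $\rho E$ gives $T^\text{wall}\,\mathbf{1}^\top\mathcal{P}\bar{\mathcal{S}}(\mathcal{T})\le \mathbf{1}^\top\mathcal{P}\bar B(\mathcal{T})$. Next I apply the discrete analogue of Lemma~\ref{lem:entropy_bounds_solution_diffrence}. For each node, Taylor's theorem with mean-value state $\overline{U}$ gives $\bar{\mathcal{S}}\ge\frac{\lambda_{\min}}{2}|U(\mathcal{T})-U(0)|^2$. Since $\mathcal{P}$ is diagonal and positive for LGL SBP operators, summing with the $\mathcal{P}$ weights yields $\|U(\mathcal{T})-U(0)\|_\mathcal{P}^2\le \frac{2}{\lambda_{\min}}\mathbf{1}^\top\mathcal{P}\bar{\mathcal{S}}(\mathcal{T})$. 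A triangle-type splitting with $\|U(0)\|_\mathcal{P}^2$ then produces \eqref{eq:semi-discrete-solution-bound}, with the constants arranged as in the continuous theorem. I do not track the $1/T^\text{wall}$ and squared-versus-unsquared norm bookkeeping carefully; I absorb it in the same way the continuous statement does.

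\textbf{Main obstacle.} The hardest step is Step 2: showing that the discrete $\mathbf{W}(0)^\top\mathcal{P}\frac{\partial U}{\partial t}$ contributes nothing beyond the energy surface terms. Interior-interface SATs must telescope and vanish after summation over elements, which follows from conservation of the scheme and from $\mathbf{W}(0)$ being constant across an interface. Wall penalties other than the heat flux must vanish under the converged no-slip assumption. If $\mathbf{W}(0)$ is not spatially constant, I would first restrict to a uniform reference state $U(0)$ so that conservation applies directly.
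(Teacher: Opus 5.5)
Your outline follows the paper's proof almost step for step. You pass to the affine entropy and reduce the extra term $-T^{\text{wall}}\mathbf{W}(0)^\top\mathcal{P}\,\partial_t U$ to energy wall terms, bounded using $T>0$. You use $\rho E>0$ to get $T^{\text{wall}}\,\mathbf{1}^\top\mathcal{P}\bar{\mathcal{S}}\le\mathbf{1}^\top\mathcal{P}\bar{B}$, apply a nodewise Taylor expansion, and finish with a triangle inequality. Your remark that $\mathcal{P}$ is diagonal is exactly what lets you sum the nodewise Taylor inequalities.

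The genuine gap is the final assembly, which you explicitly wave through. It cannot be closed along this chain, because the displayed constants do not follow from it. What Steps 1--4 actually give is
\[
\|U(\mathcal{T})-U(0)\|_{\mathcal{P}}^2\le\frac{2}{\lambda_{\min}T^{\text{wall}}}\left[[0,0,0,0,1]\mathcal{P}\rho E(0)+\int_0^{\mathcal{T}}\mathcal{C}_{\text{ballistic}}(\alpha)\,dt+\mathcal{T}\,\mathbf{1}^\top L^-\left(\kappa\alpha\frac{(T^{\text{wall}})^2}{T_0}\right)\right].
\]
From there you get only $\|U(\mathcal{T})\|_{\mathcal{P}}\le\|U(\mathcal{T})-U(0)\|_{\mathcal{P}}+\|U(0)\|_{\mathcal{P}}$, or equivalently $\|U(\mathcal{T})\|_{\mathcal{P}}^2\le 2\|U(\mathcal{T})-U(0)\|_{\mathcal{P}}^2+2\|U(0)\|_{\mathcal{P}}^2$. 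The splitting $\|a+b\|_{\mathcal{P}}^2\le\|a\|_{\mathcal{P}}^2+\|b\|_{\mathcal{P}}^2$ that the displayed form requires is false (take $b=a$). Nor can you ``absorb'' the missing factor $1/T^{\text{wall}}$ or the missing prefactor on the $\mathcal{T}$-term, since $T^{\text{wall}}$ and $2/(\lambda_{\min}T^{\text{wall}})$ can lie on either side of $1$. The paper's proof makes the identical move (it writes the squared triangle inequality), so you are not missing an idea it supplies. Still, the honest conclusion of both arguments is the bound above, and you should state that rather than claim the display.

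Step 2 also asserts more than the hypotheses give. For spatially uniform $\mathbf{W}(0)$, conservation reduces $\mathbf{W}(0)^\top\mathcal{P}\,\partial_t U$ to $\mathbf{W}(0)^\top$ applied to the whole numerical wall flux. Under converged no-slip, the momentum part of that flux is the wall pressure (from $f_{x_1}^{ssr}(V,V^{I,wall})$) plus the viscous traction. Neither vanishes: both enter multiplied by $v_i(0)/T_0$, have indefinite sign, and are not controlled by $\mathcal{C}_{\text{ballistic}}(\alpha)$. They are physical fluxes, not penalties, so your converged-wall assumption does not remove them. For a tangentially moving wall, the energy flux also retains the shear work $\tau_{jn}v_j^{\text{wall}}$.

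The wall-only bound therefore needs a uniform reference state at rest, with $v_{\text{ref}}=0$ and $T_{\text{ref}}=T_0$. Then mass conservation disposes of the first entry and the momentum entries are zero. But that reference is not $U(0)$ in general, so your Step 3 claim $\bar{\mathcal{S}}(0)=0$ fails. The initial ballistic energy picks up $T^{\text{wall}}\,\mathbf{1}^\top\mathcal{P}\bar{\mathcal{S}}(U(0))$, and the triangle step yields $\|U_{\text{ref}}\|_{\mathcal{P}}$ instead of $\|U(0)\|_{\mathcal{P}}$. If instead you keep $U(0)$ as the reference with non-uniform initial data, you are left with volume terms involving $\mathcal{D}_i\mathbf{W}(0)$ that are not boundary terms at all. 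The paper simply asserts the wall-only bound, so this gap is shared too. A complete argument must either add the at-rest uniform reference as a hypothesis and carry the extra initial terms, or bound these contributions.
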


\begin{proof}
    Similar to the proof of Theorem~\ref{thm:ballistic-energy-bound}, the proof proceeds in two directions. First, the ballistic energy rate bound is computed for the affine entropy $\bar{\mathcal{S}}$. Lemma~\ref{lem:semi-discrete-ballistic-rate-bound} provides the bound for the Godunov entropy. Similarly, the constant terms in the affine entropy are canceled by the time derivative. The remaining term is bounded as follows.
    \begin{equation}
        \frac{d \mathcal{S}(0)}{d U}^\top \mathcal{P} \mathcal{P}^{-1}_i \frac{\partial U}{\partial t}
        \leq \mathbf{1}^\top L^- \left(
            \kappa \frac{T^\text{wall}}{T_0} \frac{\partial T}{\partial n}
            - \kappa \alpha \frac{T^\text{wall}}{T_0} (T - T^\text{wall})
        \right).
    \end{equation}
    The first term is the second term in $C_\text{ballistic}(\alpha)$. A positivity-preserving scheme provides $T > 0$, so the second term in the previous expression is bounded by a constant, as done in the proof of Theorem~\ref{thm:ballistic-energy-bound}. In addition, a positivity-preserving scheme also provides $\rho E > 0$ and thus, similar to the continuos proof,
    \begin{equation}
        \bar{\mathcal{S}}(\mathcal{T}) \leq \frac{B}{T^\text{wall}}
    \end{equation}

    Lemma~\ref{lem:entropy_bounds_solution_diffrence} applies semi-discretely as well, since the bound is obtained via a Taylor's theorem for the affine entropy. The SBP operator $\mathcal{P}$ provides a norm, thus the triangle inequality holds.
    \begin{equation}\label{eq:semi-discrete-triangle-inequality}
      \|U(\mathcal{T}) - U(0) + U(0)\|_{\mathcal{P}}^2
      \leq \|U(\mathcal{T}) - U(0)\|_{\mathcal{P}}^2
      + \|U(0)\|_{\mathcal{P}}^2.
    \end{equation}
    Combining the inequality from the solution difference, the semi-discrete ballistic energy bound for the affine transformed entropy, and the semi-discrete triangle inequality~\eqref{eq:semi-discrete-triangle-inequality}, the desired bound on the discrete solution is obtained.
\end{proof}

This completes the building blocks for an entropy-stable high-order semi-discrte scheme with isothermal boundary conditions.


\section{Numerical results}
\label{sec:numerical-results}


All cases used meshes generated with Gmsh~\cite{geuzaine2009gmsh}. The visualizations were exported from Tecplot. Some line profile plots were generated using the Matplotlib package~\cite{hunter2007matplotlib}. The first two cases utilize the Portable, Extensible Toolkit for Scientific Computation (PETSc) time-stepping functionality~\cite{petsc-ts}. The semi-discrete high-order entropy-stable scheme detailed in the previous sections is implemented as an in-house solver and is used to obtain the results in this section.

For all test problems considered, the flow quantities are non-dimensionalized as follows: $t=\frac{t'}{L^*/v^*}$, $x_j=\frac{x_j'}{L^*}$, $\rho=\frac{\rho'}{\rho^*}$, $v_i=\frac{v_i'}{v^*}$, $\mu=\frac{\mu'}{\mu^*}$, $p=\frac{p'}{p^*}$, $R=\frac{R'}{R^*}$, $E=\frac{E'}{\gamma c_{\rm v}^* T^*}$, $\kappa=\frac{\kappa'}{\kappa^*}$ and $c_{\rm v}=\frac{c_{\rm v}'}{c_{\rm v}^*}$. Thus, the non-dimensional pressure, total and kinetic energy variables are given by
\begin{align}
    &p=\rho T, &E=\frac{1}{\gamma} T + E_k, &&E_k=\frac{(\gamma-1)M_{\infty}^2}{2}u^2.
\end{align}
The exponential nature of the parameter $\alpha$ ensures that the influence of the penalty term scales appropriately with the mesh refinement. This setup significantly increases the stiffness of the system, which may necessitate smaller time steps for explicit time integration schemes. Thus, the implicit backward euler was used to alleviate the stiffness in cases using $\alpha=\Delta x^{-p}$. The error calculations are defined as follows.
\begin{align*}
    e_\text{temperature}&=\|T-T^\text{wall}\|_{L^2}/\|\Gamma\|_{L^2},\\
    e_\text{velocity}&=\|v-v^\text{wall}\|_{L^2}/\|\Gamma\|_{L^2},\\
    BE_{\text{est},L2}&=\|BE_\text{est}\|_{L^2}/\|\Gamma\|_{L^2},
\end{align*}
where $\Gamma$ is the wall boundary length, $T^\text{wall}$ is the isothermal boundary value at the face node, $v^\text{wall}$ is the tangential wall velocity (inspirational moving wall condition, and $BE_\text{est}$ is the expression in~\eqref{eq:ballistic-energy-bound}. Herein, four cases are considered to show the numerical properties of this penalty.
\begin{itemize}
    \item a 2D channel flow to verify the design-order accuracy of the proposed isothermal wall boundary condition,
    \item a 2D flow over a square cylinder to further verify the design-order accuracy,
    \item a supersonic flow over a cylinder to demonstrate the ability of the proposed boundary condition to handle extreme heat fluxes at the wall, and
    \item a thermal Couette flow to demonstrate the accuracy in comparison to the available reference case.
\end{itemize}


\subsection{2D Channel flow}

A 2D channel flow with periodic, dissipating, sinusoidal waves is considered to demonstrate the design-order of the isothermal and no-slip wall boundary conditions. In addition, the ballistic energy bound~\eqref{eq:ballistic-energy-bound} is measured to verify whether the bound vanishes. The 2D channel flow described in~\cite{chan2022entropy} spans $[-2,2,]\times[-1,1]$ subdivided as $2N\times N$ with mesh spacing in the $y$ direction defined by
\begin{equation}
    \tilde{y} = y_\text{uniform} + \frac{1}{4}\sin(\pi y_\text{uniform}).
\end{equation}

The ideal gas flow is characterized by $Re=50$, $Pr=0.75$, and $Ma=0.3$, the reference temperature is $T_\text{ref}=300 K$, and the wall temperature is $T_w=310 K$. A noslip boundary condition is applied at the walls, and periodic boundary conditions are applied in the streamwise direction. The case is initialized by the following initial conditions
\begin{equation}
    \rho = 1, 
    \quad u 
    = \frac{1}{10}
    \sin\left(\frac{\pi x}{2}\right)
    \cos\left(\frac{\pi y}{2}\right), 
    \quad v = \frac{1}{10}
    \cos\left(\frac{\pi x}{2}\right)
    \sin(\pi y), 
    \quad P = 1.
\end{equation}
The case is advanced until time $t=0.5$ using the implicit backward Euler method. The convergence of the temperature, velocity, and ballistic energy bound at the wall are assessed for $p=2,3,4$ on a sequence of uniformly refined meshes. The resulting flow at $t=0.5$ is shown in Figure \ref{fig:chan_flow}. The temperature profile displays an expected flare towards the walls due to the isothermal boundary condition being higher than the flow temperature.

\begin{figure}
    \begin{minipage}{0.49\textwidth}
        \includegraphics[width=\textwidth]{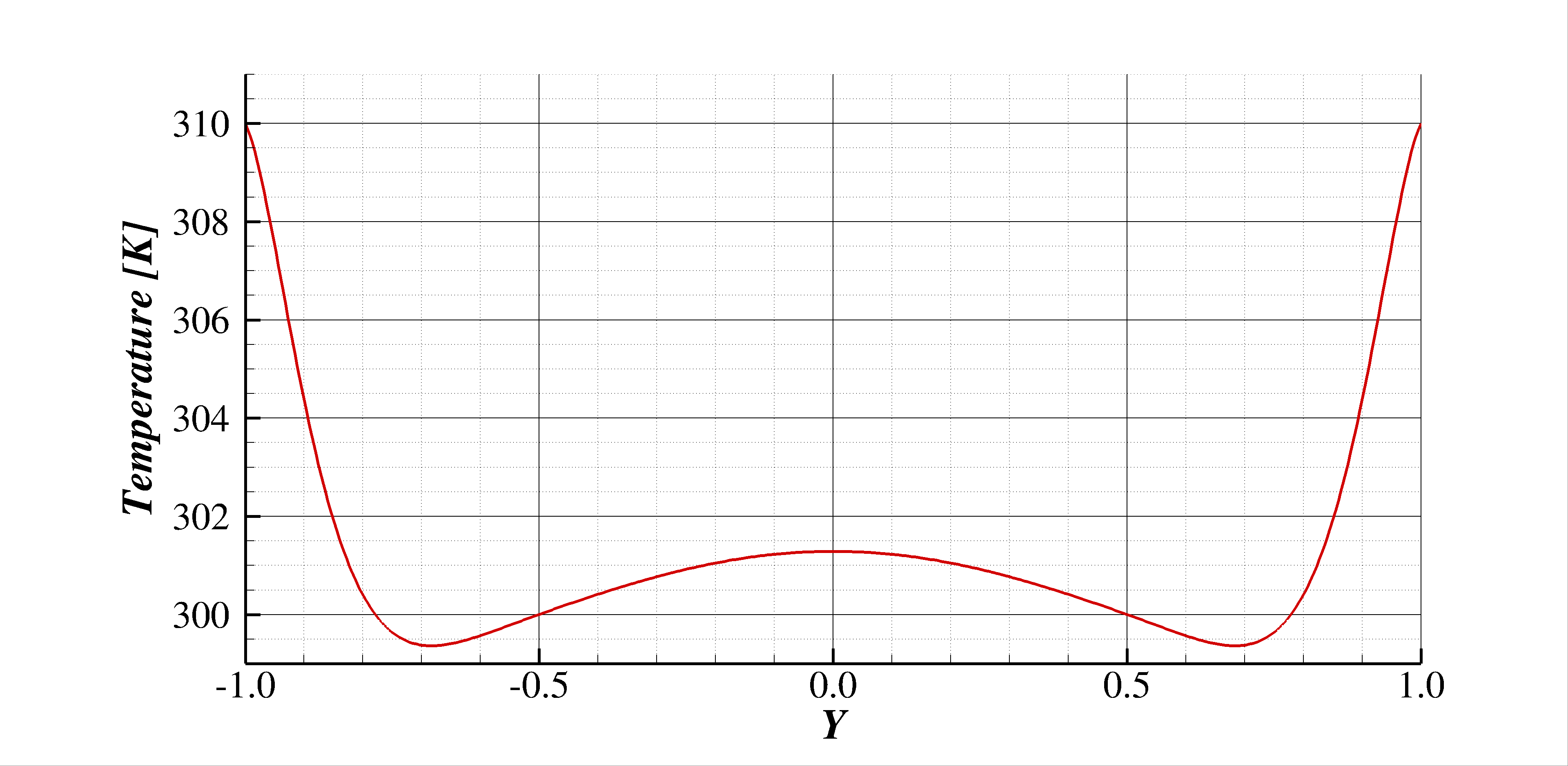}
    \end{minipage}
    \begin{minipage}{0.49\textwidth}
        \includegraphics[width=\textwidth]{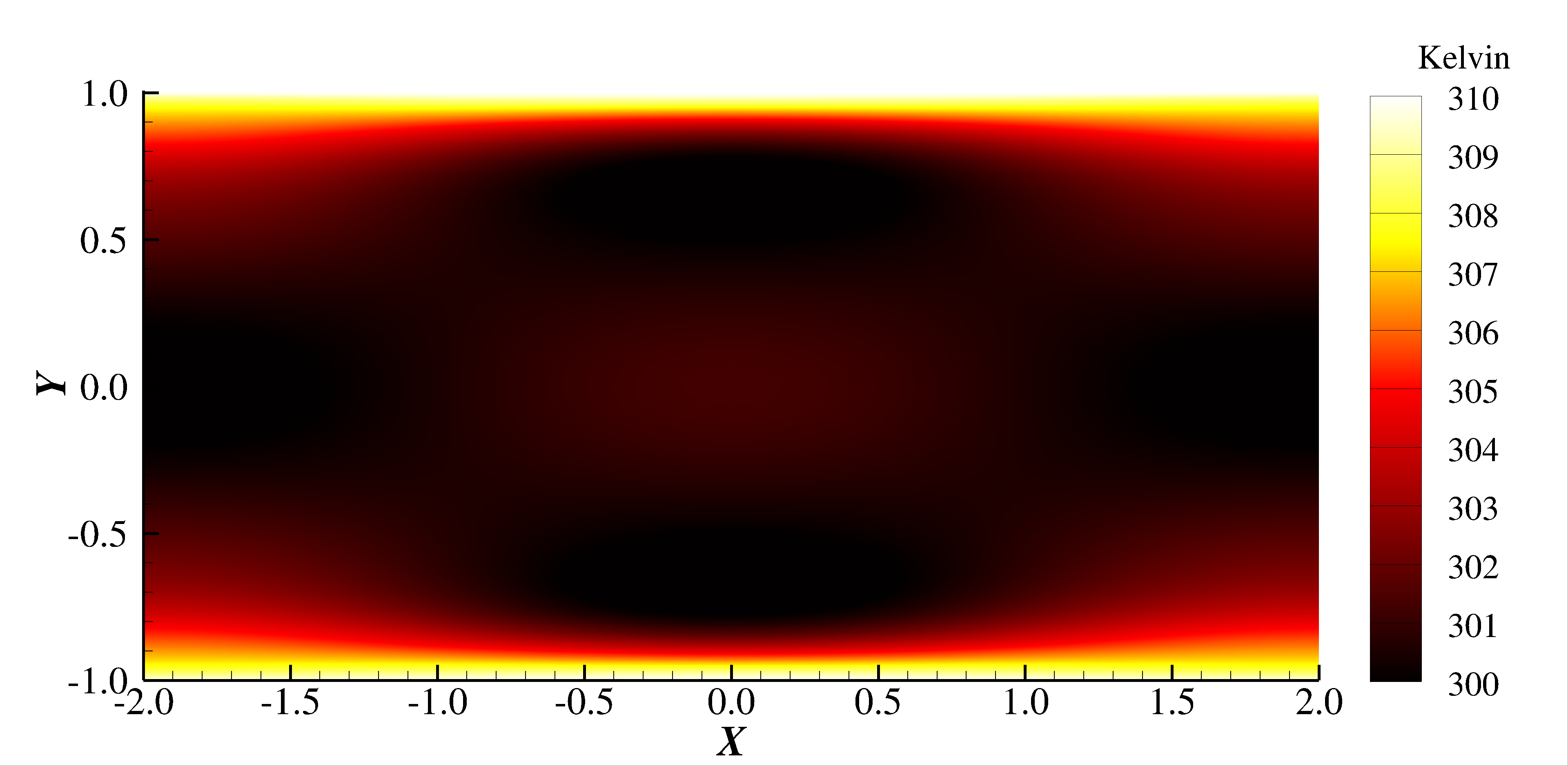}    
    \end{minipage}
    \caption{Temperature field at $t=0.5$ for the channel case with $N=64$ and $p=4$.}
    \label{fig:chan_flow}
\end{figure}

The convergence results for the wall temperature are summarized in Table \ref{tab:chan_temperature}. The ideal convergence rate of at least $\mathcal{O}(h^{p+1})$ is observed for all cases $p=2,3,4$. This shows that the scheme is design-order accurate for the temperature at the wall.

\begin{table}
    \caption{Wall temperature convergence rates of the channel case with $p=2,3,4$ on a globally refined uniform grid.}
    \label{tab:chan_temperature}
    \centering
    \begin{tabular}{c|cccccc}
        $N_{elem}$ & $e_\text{temperature}$ & $p=2$ & $e_\text{temperature}$ & $p=3$ & $e_\text{temperature}$ & $p=4$ \\
        \hline
        $4$  & $7.81\times 10^{-4}$ &   --   & $8.81\times 10^{-5}$  &   --   & $1.19\times 10^{-5}$  &   --   \\
        $8$  & $3.33\times 10^{-5}$ & $4.55$ & $1.12\times 10^{-6}$  & $6.29$ & $4.38\times 10^{-8}$  & $8.08$ \\
        $16$ & $2.74\times 10^{-6}$ & $3.60$ & $3.62\times 10^{-8}$  & $4.96$ & $5.63\times 10^{-10}$ & $6.28$ \\
        $32$ & $3.00\times 10^{-7}$ & $3.19$ & $1.85\times 10^{-9}$  & $4.29$ & $1.44\times 10^{-11}$ & $5.29$ \\
        $64$ & $3.62\times 10^{-8}$ & $3.05$ & $1.23\times 10^{-10}$ & $3.92$ & $4.40\times 10^{-13}$ & $5.03$ \\
    \end{tabular}
\end{table}

For the wall velocity, summarized in Table~\ref{tab:chan_velocity}, the expected convergence rate of $\mathcal{O}(h^{p+1})$ is observed for all polynomial orders considered, consistent with the results reported in~\cite{chan2022entropy}. This demonstrates that the novel isothermal wall boundary condition does not affect the convergence of the velocity at the wall.

\begin{table}
    \caption{Wall velocity convergence rates of the channel case with $p=2,3,4$ on a globally refined uniform grid.}
    \label{tab:chan_velocity}
    \centering
    \begin{tabular}{c|cccccc}
        $N_{elem}$ & $e_\text{velocity}$ & $p=2$ & $e_\text{velocity}$ & $p=3$ & $e_\text{velocity}$ & $p=4$ \\
        \hline
        $4$  & $4.71\times 10^{-2}$ &   --   & $1.14\times 10^{-2}$ &   --   & $2.35\times 10^{-4}$  &   --   \\
        $8$  & $1.72\times 10^{-3}$ & $4.77$ & $1.59\times 10^{-3}$ & $2.84$ & $4.07\times 10^{-4}$  & $-0.8$ \\
        $16$ & $3.88\times 10^{-4}$ & $2.15$ & $1.41\times 10^{-4}$ & $3.49$ & $1.13\times 10^{-5}$  & $5.17$ \\
        $32$ & $1.17\times 10^{-5}$ & $5.06$ & $1.10\times 10^{-5}$ & $3.67$ & $1.32\times 10^{-8}$  & $9.74$ \\
        $64$ & $1.71\times 10^{-6}$ & $2.77$ & $3.08\times 10^{-7}$ & $5.17$ & $2.85\times 10^{-10}$ & $5.53$ \\
    \end{tabular}
\end{table}

In addition to the wall boundary conditions, the bound on the ballistic energy \eqref{eq:ballistic-energy-bound} is assessed using the $L^2$ domain normalized norm $BE_{\text{est},L2}$. The convergence of the ballistic energy bound is summarized in Figure~\ref{fig:chan_ballistic}. This estimate does not necessarily need to be of design-order; more importantly, it shows that, while the temperature gradient at the wall is not known a priori, the ballistic energy bound converges to zero with mesh refinement.

\begin{figure}
    \centering
    \begin{minipage}{0.49\textwidth}
        \includegraphics[width=\textwidth]{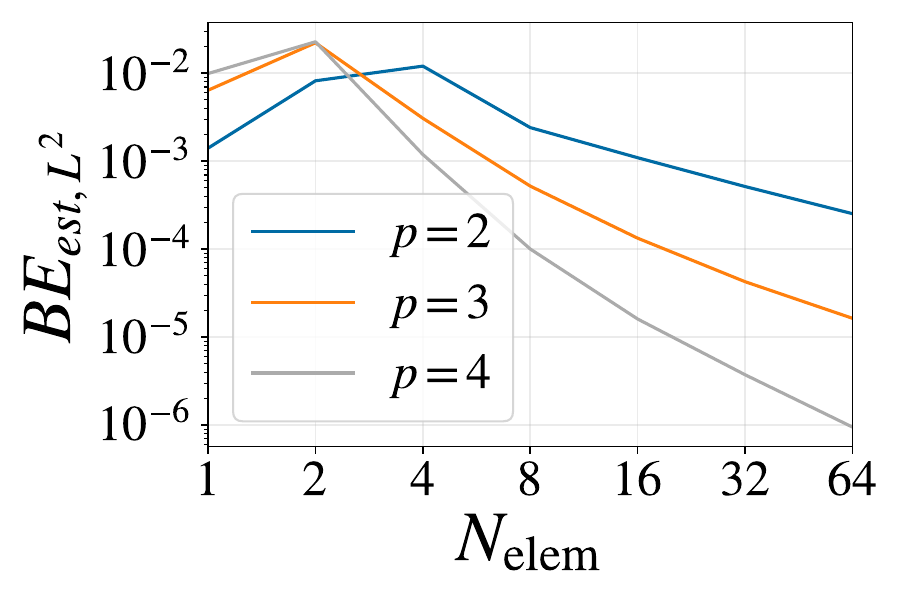}
    \end{minipage}
    \hfill
    \begin{minipage}{0.49\textwidth}
        \vspace{6em}
        \caption{Wall ballistic energy bound convergence rates of the channel case with $p=2,3,4$ on a globally refined uniform grid.}
        \label{fig:chan_ballistic}
    \end{minipage}
\end{figure}


\subsection{Subsonic flow around a cooling square cylinder}
 
The case of a flow around a cooling square is considered based on the case in~\cite{parsani2014entropy}. The ideal gas flow is characterized by $Re=200$, $Pr=0.72$, and $Ma=0.3$, the reference temperature is $T_\text{ref}=200 K$, and the wall temperature is $T^\text{wall}=150 K$. The boundary conditions are no-slip and isothermal at the wall, uniform inflow at the left boundary, and inviscid (slip) symmetry along the top and bottom boundaries, and a subsonic outflow at the right boundary. The case is initialized by a uniform flow in the whole domain. 

The mesh is a two-dimensional mesh with a square of length $d$ centered at the origin. The bounding rectangle is $[-23.5d,57.5d]\times[-29.5d,29.5d]$. The mesh contains a structured area surrounding the square and in the wake. The refinement doubles the boundary layer elements, $N_\text{elem}$, and proportionally the rest of the structured regions, resulting in approximatly a quadruple increase in total elements per refinement. The meshing algorithm results in a full quad mesh.

The case is advanced until time $t=5$ using the backward Euler method, and the convergence of the wall temperature is assessed at the final time. The results in Table~\ref{tab:cyl_convergence} presents the convergence rates where the parameter $\alpha=\Delta x^{-p}$ is set as defined in section~\ref{subsec:isothermal-wall-heat-flux}. The wall temperature errors converge at the expected $\mathcal{O}(h^{p+1})$ for both orders of accuracy considered, $p=2,3$. Therefore, the proposed isothermal penalty is design-order for wall boundaries with sharp edges.

\begin{table}
    \caption{Wall temperature convergence rates of the cylinder case with $p=2,3$ on a globally refined grid. The parameter $\alpha=\Delta x^{-p}$ is used in the simulations.}
    \label{tab:cyl_convergence}
    \centering
    \begin{tabular}{c|cccc}
        $N_{elem}$ & $e_\text{temperature}$ & $p=2$ & $e_\text{temperature}$ & $p=3$ \\
        \hline
        $1$ & $7.74\times 10^{-4}$ &   --   & $3.08\times 10^{-5}$ &   --   \\
        $2$ & $1.01\times 10^{-4}$ & $2.94$ & $2.14\times 10^{-6}$ & $3.85$ \\
        $4$ & $1.06\times 10^{-5}$ & $3.25$ & $1.15\times 10^{-7}$ & $4.22$ \\
        $8$ & $1.29\times 10^{-6}$ & $3.03$ & $7.13\times 10^{-9}$ & $4.01$ \\
    \end{tabular}
\end{table}

The computed ballistic energy bound is presented in Figure~\ref{fig:cyl_be_est}. Similar to the channel case, the ballistic energy estimate converges to zero with grid refinement. This shows that the ballistic energy rate for this case is bounded and, thus entropy-stable.

\begin{figure}
    \centering
    \begin{minipage}{0.49\textwidth}
        \includegraphics[width=\textwidth]{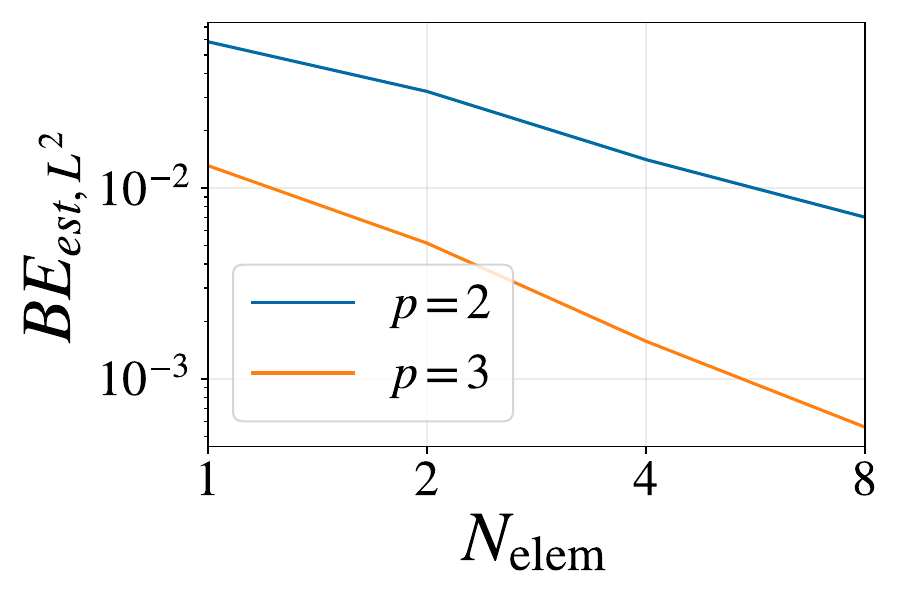}
    \end{minipage}
    \hfill
    \begin{minipage}{0.49\textwidth}
        \vspace{6em}
        \caption{Wall ballistic energy bound convergence rates of the square cylinder case with $p=2,3$ on a globally refined grid.}
        \label{fig:cyl_be_est}
    \end{minipage}
\end{figure}


\subsection{Bow shock ahead of a heated cylinder}
\label{subsec:bow-shock-heated-cylinder}

The case of supersonic argon flow over a cylinder is considered to showcase the ability to enforce the isothermal condition for cooling extreme heat at the stagnation point and the side of the cylinder. This case is adapted from~\cite{peng2024extensions} where the flow is characterized by $\gamma=1.667$, $Re=5000$, $Pr=0.667$ and $Ma=10$, the reference temperature is $T_\text{ref}=200 K$, and the wall temperature is $T^\text{wall}=500 K$. The boundary conditions are no-slip and isothermal at the wall, uniform inflow at the left boundary, and supersonic outflow at the top, right, and bottom boundaries. The case is initialized with a uniform flow at the inflow conditions, which transitions to the Dirichlet values with a hyperbolic tangent profile for the velocity and temperature near the cylinder to avoid a large initial mismatch. 

The computational domain is two-dimensional and consists of a circular cylinder of diameter $1$ centered at the origin, within the rectangular domain $[-2,3]\times[-2,2]$. A thin near-wall boundary-layer region of thickness $8\times10^{-3}$ surrounds the cylinder, with $3$ elements across the layer and $72$ elements per quadrant along the cylinder surface. The mesh is stretched away from the cylinder. The mesh contains $15,192$ quadrilateral elements.

In this $Ma=10$ case, the Reynolds number yields $Kn=0.002$ as done in~\cite{peng2024extensions}. This case is particularly challenging due to the high Mach number, which increases the temperature at the stagnation upto $\sim 4200K$. The case uses a $p=4$ polynomial approximation and is advanced until $t=10$. The positivity-preserving SSPRK3 time-stepper based on~\cite{yamaleev2023high} was used to ensure the positivity of thermodynamic variables. Due to the stiffness of this case, the isothermal penalty parameter is set to $\alpha=\Delta x_\text{avg}^{-2}$.

\begin{figure}
  \centering
  \begin{minipage}{0.49\textwidth}
    \includegraphics[width=\textwidth]{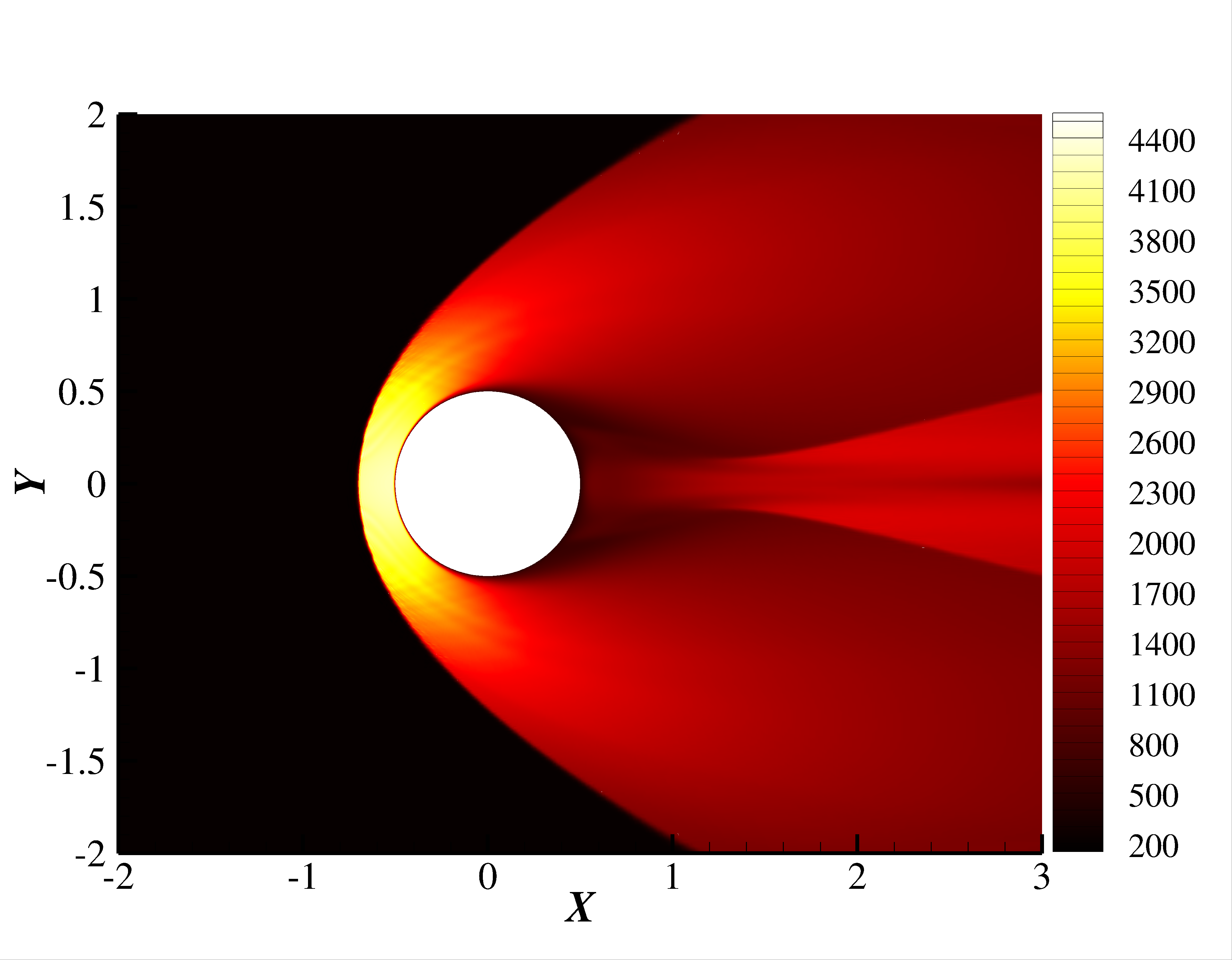}
  \end{minipage}
  \hfill
  \begin{minipage}{0.49\textwidth}
    \includegraphics[width=\textwidth]{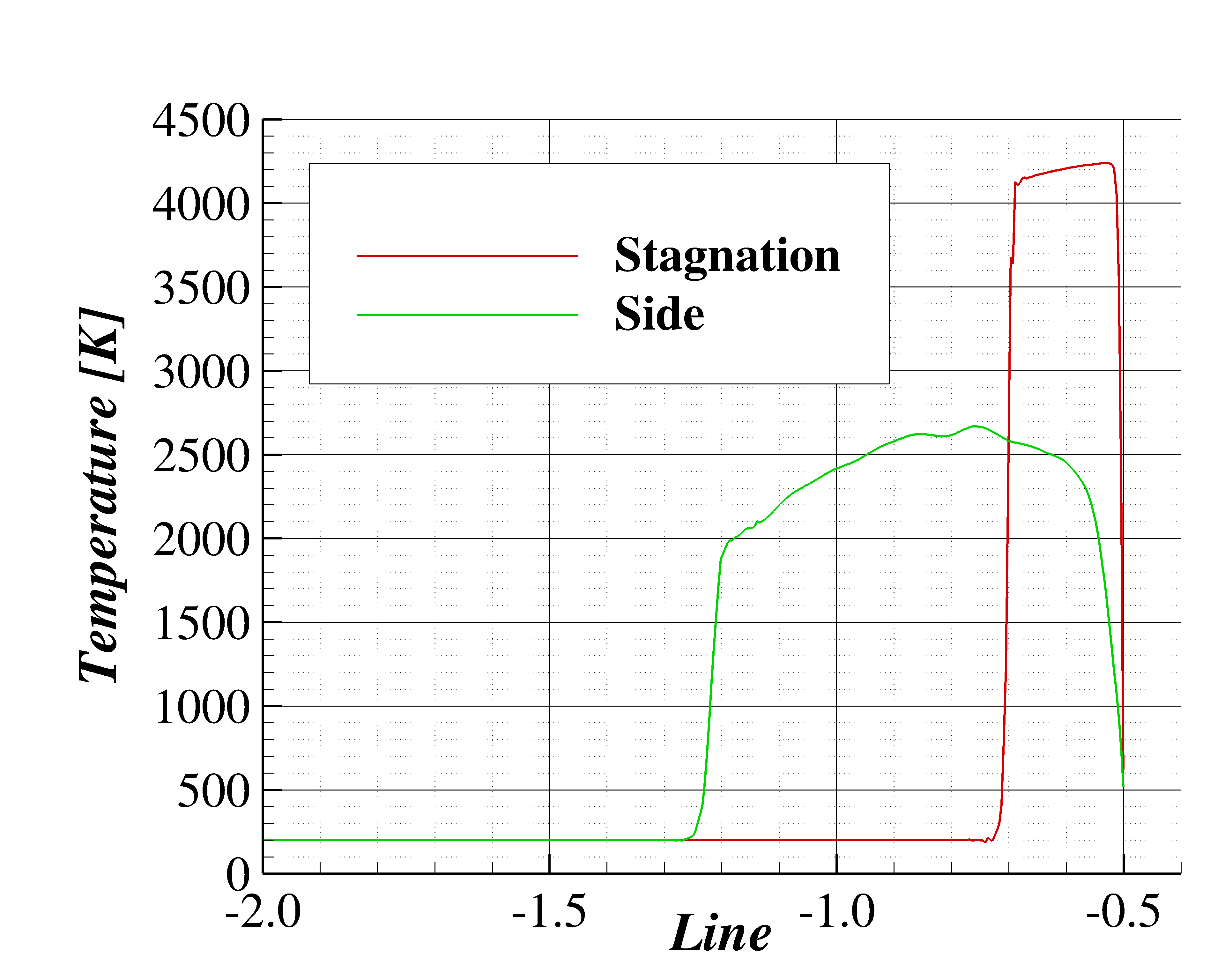}
  \end{minipage}
  \caption{Temperature field, stagnation ($x=[-2,-0.5]$, $y=0$), and the side line ($x=0$,$y=[-2,-0.5]$) temperatures at $t=10$ for the supersonic cylinder at $Ma=10$ with $p=4$.}
  \label{fig:cylinder_Ma10_flow_lines}
\end{figure}

The case is advanced up to time $t=10$, with the residual reaching approximately $3\times10^{-1}$. The resulting flow at $t=10$ is shown in the left panel of Figure \ref{fig:cylinder_Ma10_flow_lines}, and the temperature along the stagnation and side lines is shown in the right panel of Figure \ref{fig:cylinder_Ma10_flow_lines}. The results show that the scheme can enforce the isothermal condition at the cylinder wall, with a temperature difference of $\sim 3700 K$ between the stagnation bubble and the wall. Additionally, the cooler region below the cylinder is shown, where the isothermal condition enforces the correct temperature at the wall. The results also show that the scheme remains stable even in the presence of extreme wall heat fluxes.


\subsection{Thermal Couette flow}

\begin{figure}
  \centering
  \begin{minipage}{0.49\textwidth}
    \includegraphics[width=\textwidth]{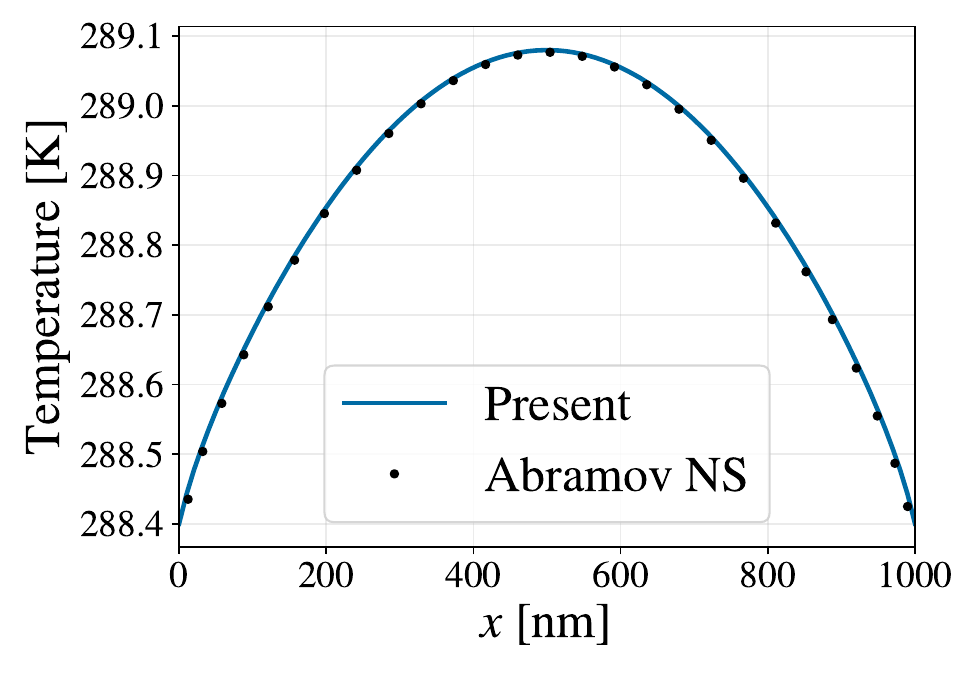}
  \end{minipage}
  \hfill
  \begin{minipage}{0.49\textwidth}
    \includegraphics[width=\textwidth]{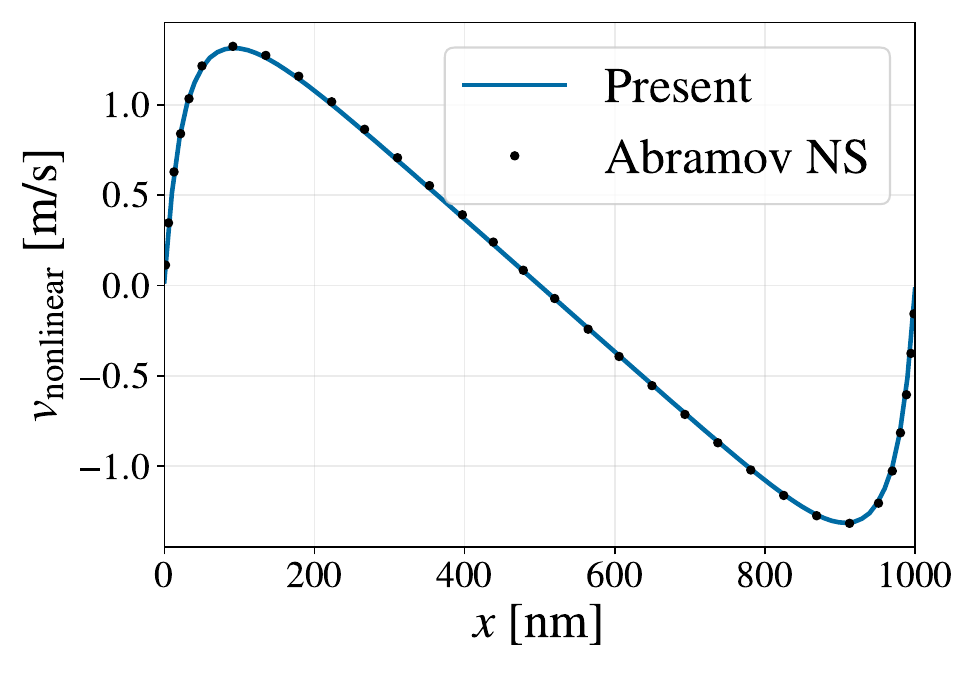}
  \end{minipage}
  \caption{Profiles of temperature (Kelvin) and nonlinear velocity component ($v_\text{nonlinear} = v - v_\text{linear}$) at steady state for the thermal Couette flow $N=32$ with $p=4$ and $\alpha=\Delta x^{-2}$.}
    \label{fig:couette_flow_lines}
\end{figure}

The thermal Couette flow is a canonical test case for validating the implementation of viscous and thermal boundary conditions. It is used to compare the accuracy against the Navier--Stokes result in \cite{abramov2017diffusive}. The ideal gas flow is charachterized by $Re=3.03$, $Pr=0.69$, and $Ma=0.131$, the reference temperature is $T_\text{ref}=288.15K$ and the wall temperature is $T^\text{wall}=288.4K$. The case is initialized with a velocity, $v_\text{linear}$, which is the slope of a line from $-45.45\frac{m}{s}$ to $45.45\frac{m}{s}$ such that the moving-wall velocities are satisfied at the initial state. The temperature is initialized as the reference temperature. Then, the case is advanced until steady state.

The viscosity in this case is defined by a volume viscosity and a near-wall scaled viscosity. The volume viscosity is defined as
\begin{equation}
    \mu = \sqrt{T}.
\end{equation}
A near-wall scaling is applied to the viscosity to account for the near-wall viscous effects, and is defined as
\begin{equation}
    \frac{\mu_\text{wall}}{\mu} = 1 + \frac12 \left(\frac{d_x}{\lambda} E_1(d_x/\lambda) - e^{-d_x/\lambda} \right),
\end{equation}
where $d_x$ is the distance from the wall, and
\begin{equation}
    E_1(x) = \int_{d_x}^\infty \frac{e^{-t}}{t} \, dt, 
    \quad\quad \lambda = \frac{Ma}{Re} \frac{\mu}{\rho} \sqrt{\frac{\gamma \pi}{2 T}}.
\end{equation}
The exponential integral $E_1(x)$ is approximated using a series expansion for small arguments and an asymptotic expansion for large arguments.

The domain consists of two parallel plates separated by a non-dimensional distance of $1$ ($1000nm$). The boundary conditions are an isothermal moving wall with $v_1=45.45\frac{m}{s}$ at the top plate and $v_1=-45.45\frac{m}{s}$ at the bottom plate, and periodic in the streamwise direction. The mesh is uniform, with $N=32$ elements in the wall-normal direction and $N=16$ in the streamwise direction, resulting in $4:1$ rectangular cells. Due to the stiffness of the problem, the isothermal coefficient is set to $\alpha=\Delta x^{-2}$. The case is advanced until non-dimensional time $t=1$ with residual $2\times10^{-5}$.

The case is advanced until $t=1$ using the implicit backward Euler scheme. Figures \ref{fig:couette_flow_lines} show the temperature and nonlinear velocity profiles. The linear velocity dominates the velocity magnitude, thus it is subtracted to reveal the nonlinear part that represents the viscous clinging to the moving walls. The results show that the scheme can enforce the isothermal condition at the walls without altering the expected velocity profile. The solution obtained is identical to the Navier--Stokes result in \cite{abramov2017diffusive}.


\section{Conclusions}
\label{sec:conclusions}

A novel entropy-stable high-order isothermal penalty is derived, and its stability and accuracy results are demonstrated. The penalty is derived as a temperature correction to the Fourier term, resulting in an isothermal consistent temperature gradient. A ballistic energy estimate is used to bound the $L^2$ norm of the solution, both continuously and discretely via SBP-SAT operators. The results show that this penalty is design-order accurate, even in the presence of a square wall. In addition, the convergence of the bound on the ballistic energy shows that the scheme is structure-preserving.

While the design-order property requires $\alpha=\Delta x^{-p}$, resulting in very stiff problems that require implicit time-steppers, this remains a limitation of this scheme for a case such as the $Ma=10$ flow around a heated cylinder. However, enforcing the isothermal condition can still be achieved with a relaxed parameter $\alpha=\Delta x_\text{avg}^{-q}$, where $\Delta x_\text{avg}$ is the average spacing in the domain and $q<p$. The latter approach is sufficient to enforce the isothermal condition, as in the case of the thermal Couette flow, which was shown to match the reference solution. Even under extreme flow conditions such as the $Ma=10$ flow around a heated cylinder, where the temperature difference at the stagnation point is $\sim3700 K$. 

Enforcing this penalty with design-order is observed to be stiff for all problems; subsonic and low-Mach cases that do not require positivity-preserving mechanisms may use a fully implicit scheme, as in the cases of subsonic pipe flow, subsonic square cylinder, and thermal Couette flow. However, higher-Mach cases, such as the $Ma=10$ bow shock flow, must use positivity-preserving schemes. The dual time-stepping positivity-preserving schemes~\cite{sayyari2026implicit,sayyari2026bdf2} can be used in this case to alleviate stiffness.




\section*{Acknowledgments}
The second author gratefully acknowledges the support from Department of Defense through grant W911NF2310183. 


\bibliographystyle{siamplain}
\bibliography{references}


\end{document}

%% file: positivity3D_def.tex
\usepackage{bm}
\usepackage{xspace}

\newcommand{\mr}[1]{\ensuremath{\mathrm{#1}}}
\newcommand{\mat}[1]{\ensuremath{\mathsf{#1}}}
\newcommand{\fnc}[1]{\ensuremath{\mathit{#1}}}
\newcommand{\bfnc}[1]{\ensuremath{\bm{\mathit{#1}}}}

\newcommand{\U}[0]{\ensuremath{\bfnc{U}}}

\newcommand{\Efnc}[0]{\ensuremath{\fnc{E}}}



%% file: references.bib
@phdthesis{upperman2021high,
  title={High-Order Positivity-Preserving L 2-Stable Spectral Collocation Schemes for the 3-D Compressible Navier-Stokes Equations},
  author={Upperman, Johnathon Keith},
  year={2021},
	school={Old Dominion University},
}

@article{abramov2017diffusive,
  title={Diffusive Boltzmann equation, its fluid dynamics, Couette flow and Knudsen layers},
  author={Abramov, Rafail V},
  journal={Physica A: Statistical Mechanics and its Applications},
  volume={484},
  pages={532--557},
  year={2017},
	publisher={Elsevier},
	doi = {10.1016/j.physa.2017.05.008},
	url = {https://doi.org/10.1016/j.physa.2017.05.008},
}

@article{brenner2005navier,
  title={Navier--stokes revisited},
  author={Brenner, Howard},
  journal={Physica A: Statistical Mechanics and its Applications},
  volume={349},
  number={1-2},
  pages={60--132},
  year={2005},
	publisher={Elsevier},
	doi = {10.1016/j.physa.2004.10.034},
	url = {https://doi.org/10.1016/j.physa.2004.10.034},
}

@misc{peng2024extensions,
  title={Extensions combining Chapman--Enskog and Brenner closure},
  author={Peng, N. and Han, G. and Ma, Z. and Chen, F.},
  year={2024},
  eprint={2412.18364},
  archivePrefix={arXiv},
  primaryClass={physics.flu-dyn},
  doi={10.48550/arXiv.2412.18364},
  url={https://doi.org/10.48550/arXiv.2412.18364}
}

@article{dalcin2019conservative,
  title={Conservative and entropy stable solid wall boundary conditions for the compressible {N}avier--{S}tokes equations: Adiabatic wall and heat entropy transfer},
  author={Dalcin, Lisandro and Rojas, Diego and Zampini, Stefano and Fern{\'a}ndez, David C Del Rey and Carpenter, Mark H and Parsani, Matteo},
  journal={Journal of Computational Physics},
  volume={397},
  pages={108775},
  year={2019},
  publisher={Elsevier}
}

@techreport{parsani2014entropy,
  title={Entropy stable wall boundary conditions for the compressible {N}avier--{S}tokes equations},
  author={Parsani, Matteo and Carpenter, Mark H and Nielsen, Eric J},
  year={2014}
}

@article{chan2022entropy,
  title={Entropy stable modal discontinuous {G}alerkin schemes and wall boundary conditions for the compressible {N}avier--{S}tokes equations},
  author={Chan, Jesse and Lin, Yimin and Warburton, Tim},
  journal={Journal of Computational Physics},
  volume={448},
  pages={110723},
  year={2022},
  publisher={Elsevier}
}

@article{fernandez2014review,
  author =        {Fern{\'a}ndez, David C Del Rey and Hicken, Jason E and
                   Zingg, David W},
  journal =       {Computers \& Fluids},
  pages =         {171--196},
  publisher =     {Elsevier},
  title =         {Review of summation-by-parts operators with
                   simultaneous approximation terms for the numerical
                   solution of partial differential equations},
  volume =        {95},
  year =          {2014},
}

@article{upperman2023first,
  author =        {Upperman, Johnathon and Yamaleev, Nail K},
  journal =       {Journal of Scientific Computing},
  number =        {1},
  pages =         {18},
  publisher =     {Springer},
  title =         {First-Order Positivity-Preserving Entropy Stable
                   Scheme for the 3-{D} Compressible {N}avier--{S}tokes
                   Equations},
  volume =        {94},
  year =          {2023},
}

@article{yamaleev2023high,
  author =        {Yamaleev, Nail K and Upperman, Johnathon},
  journal =       {Journal of Scientific Computing},
  number =        {1},
  pages =         {11},
  publisher =     {Springer},
  title =         {High-order positivity-preserving entropy stable
                   schemes for the 3-{D} compressible {N}avier--{S}tokes
                   equations},
  volume =        {95},
  year =          {2023},
}

@article{feireisl2010new,
  author =        {Feireisl, Eduard and Vasseur, Alexis},
  journal =       {New Directions in Mathematical Fluid Mechanics: The
                   Alexander V. Kazhikhov Memorial Volume},
  pages =         {153--179},
  publisher =     {Springer},
  title =         {New perspectives in fluid dynamics: Mathematical
                   analysis of a model proposed by {H}oward {B}renner},
  year =          {2010},
}

@article{carpenter2014entropy,
  author =        {Carpenter, Mark H and Fisher, Travis C and
                   Nielsen, Eric J and Frankel, Steven H},
  journal =       {SIAM Journal on Scientific Computing},
  number =        {5},
  pages =         {B835--B867},
  publisher =     {SIAM},
  title =         {Entropy stable spectral collocation schemes for the
                   {N}avier--{S}tokes equations: Discontinuous
                   interfaces},
  volume =        {36},
  year =          {2014},
}

@article{fisher2013discretely,
  author =        {Fisher, Travis C and Carpenter, Mark H and
                   Nordstr{\"o}m, Jan and Yamaleev, Nail K and
                   Swanson, Charles},
  journal =       {Journal of Computational Physics},
  pages =         {353--375},
  publisher =     {Elsevier},
  title =         {Discretely conservative finite-difference
                   formulations for nonlinear conservation laws in split
                   form: Theory and boundary conditions},
  volume =        {234},
  year =          {2013},
}

@article{fisher2011boundary,
  author =        {Fisher, Travis C and Carpenter, Mark H and
                   Yamaleev, Nail K and Frankel, Steven H},
  journal =       {Journal of Computational Physics},
  number =        {10},
  pages =         {3727--3752},
  publisher =     {Elsevier},
  title =         {Boundary closures for fourth-order energy stable
                   weighted essentially non-oscillatory
                   finite-difference schemes},
  volume =        {230},
  year =          {2011},
}

@inproceedings{carpenter2016entropy,
  author =        {M. H. Carpenter and T. Fisher and E. Nielsen and
                   M. Parsani and M. Svard and N. Yamaleev},
  booktitle =     {Handbook on Numerical Analysis},
  editor =        {R. Abgrall and C.-W. Shu},
  pages =         {495--524},
  publisher =     {Elsevier},
  title =         {Entropy stable summation-by-parts formulations for
                   compressible computational fluid dynamics},
  year =          {2016},
}

@article{merriam1989entropy,
  author =        {M. {L}. Merriam},
  journal =       {NASA-TM--1989--101086},
  title =         {An Entropy-Based Approach to Nonlinear Stability},
  year =          {1989},
}

@book{schlichting2016boundary,
  title={Boundary-layer theory},
  author={Schlichting, Hermann and Gersten, Klaus},
  year={2016},
  publisher={springer}
}

@article{lighthill1950contributions,
  title={Contributions to the theory of heat transfer through a laminar boundary layer},
  author={Lighthill, Michael James},
  journal={Proceedings of the Royal Society of London. Series A. Mathematical and Physical Sciences},
  volume={202},
  number={1070},
  pages={359--377},
  year={1950},
  publisher={The Royal Society London}
}

@incollection{ostrach1972natural,
  title={Natural convection in enclosures},
  author={Ostrach, Simon},
  booktitle={Advances in heat transfer},
  volume={8},
  pages={161--227},
  year={1972},
  publisher={Elsevier}
}

@article{paolucci1994differentially,
  title={The differentially heated cavity},
  author={Paolucci, S},
  journal={Sadhana},
  volume={19},
  number={5},
  pages={619--647},
  year={1994},
  publisher={Springer}
}

@article{spina1994physics,
  title={The physics of supersonic turbulent boundary layers},
  author={Spina, Eric F and Smits, Alexander J and Robinson, Stephen K},
  journal={Annual review of fluid mechanics},
  volume={26},
  number={1},
  pages={287--319},
  year={1994},
  publisher={Annual Reviews 4139 El Camino Way, PO Box 10139, Palo Alto, CA 94303-0139, USA}
}

@book{anderson2006hypersonic,
  title={Hypersonic and High-Temperature Gas Dynamics},
  author={Anderson Jr, John D},
  year={2006},
  publisher={American Institute of Aeronautics and Astronautics}
}

@article{pirozzoli2011numerical,
  title={Numerical methods for high-speed flows},
  author={Pirozzoli, Sergio},
  journal={Annual review of fluid mechanics},
  volume={43},
  number={1},
  pages={163--194},
  year={2011},
  publisher={Annual Reviews}
}

@article{wang2013high,
  title={High-order CFD methods: current status and perspective},
  author={Wang, Zhijian J and Fidkowski, Krzysztof and Abgrall, R{\'e}mi and Bassi, Francesco and Caraeni, Doru and Cary, Andrew and Deconinck, Herman and Hartmann, Ralf and Hillewaert, Koen and Huynh, Hung T and others},
  journal={International Journal for Numerical Methods in Fluids},
  volume={72},
  number={8},
  pages={811--845},
  year={2013},
  publisher={Wiley Online Library}
}

@inproceedings{persson2006sub,
  title={Sub-cell shock capturing for discontinuous Galerkin methods},
  author={Persson, Per-Olof and Peraire, Jaime},
  booktitle={44th AIAA aerospace sciences meeting and exhibit},
  pages={112},
  year={2006}
}

@article{chaudhuri2022navier,
  title={Navier--Stokes--Fourier system with Dirichlet boundary conditions},
  author={Chaudhuri, Nilasis and Feireisl, Eduard},
  journal={Applicable Analysis},
  volume={101},
  number={12},
  pages={4076--4094},
  year={2022},
  publisher={Taylor \& Francis}
}

@article{sayyari2022capabilities,
  title={The capabilities of summation-by-parts and structure-preserving operators for compressible computational fluid dynamics and reaction-diffusion models},
  author={Sayyari, Mohammed},
  year={2022}
}

@article{svard2015weak,
  title={Weak solutions and convergent numerical schemes of modified compressible Navier--Stokes equations},
  author={Sv{\"a}rd, Magnus},
  journal={Journal of Computational Physics},
  volume={288},
  pages={19--51},
  year={2015},
  publisher={Elsevier}
}

@article{geuzaine2009gmsh,
  title={Gmsh: A 3-{D} finite element mesh generator with built-in pre-and post-processing facilities},
  author={Geuzaine, Christophe and Remacle, Jean-Fran{\c{c}}ois},
  journal={International journal for numerical methods in engineering},
  volume={79},
  number={11},
  pages={1309--1331},
  year={2009},
  publisher={Wiley Online Library}
}

@article{petsc-ts,
	author={Abhyankar, Shrirang and Brown, Jed and Constantinescu, Emil M and Ghosh, Debojyoti and Smith, Barry F and Zhang, Hong},
	title={PETSc/TS: A Modern Scalable ODE/DAE Solver Library},
	journal={arXiv preprint arXiv:1806.01437},
	year={2018}
}

@article{hunter2007matplotlib,
  title={Matplotlib: A 2D graphics environment},
  author={Hunter, John D},
  journal={Computing in science \& engineering},
  volume={9},
  number={03},
  pages={90--95},
  year={2007},
  publisher={IEEE Computer Society}
}

@article{sayyari2026implicit,
  title={Implicit dual time-stepping positivity-preserving entropy-stable schemes for the compressible Navier-Stokes equations},
  author={Sayyari, Mohammed and Yamaleev, Nail K},
  journal={Journal of Computational and Applied Mathematics},
  pages={117735},
  year={2026},
  publisher={Elsevier}
}

@article{sayyari2026bdf2,
  title={Implicit BDF2 dual time-stepping positivity-preserving entropy-stable schemes for unsteady compressible viscous flows},
  author={Sayyari, Mohammed and Yamaleev, Nail K},
  journal={arXiv preprint arXiv:2608.20103},
  year={2026}
}
